\documentclass{article}

\usepackage[a4paper, total={6in, 8in}]{geometry}
\usepackage{emptypage}
\usepackage[utf8]{inputenc}
\usepackage[T1]{fontenc}
\usepackage{lmodern}
\usepackage[english]{babel}
\usepackage{amsmath, amssymb}
\usepackage{amsthm}
\usepackage{longtable}
\usepackage{placeins}
\usepackage[font=small,labelfont=bf]{caption}
\usepackage{svg, rotating}

\usepackage{algorithm}
\usepackage{algpseudocode}
\algrenewcommand\algorithmicrequire{\textbf{Input:}}
\algrenewcommand\algorithmicensure{\textbf{Output:}}

\usepackage{enumerate}
\usepackage{xcolor}
\RequirePackage[colorlinks,allcolors=blue]{hyperref}
\RequirePackage{graphicx}
\usepackage[nottoc]{tocbibind}

\usepackage{titlesec}
\titleformat{\section}{\large\bfseries}{\thesection}{1em}{}

\usepackage[round]{natbib} 
\let\oldcite\cite
\renewcommand{\cite}[1]{\mbox{\oldcite{#1}}}

\newtheorem{theorem}{Theorem}
\newtheorem{lemma}{Lemma}
\newtheorem{corollary}{Corollary}

\theoremstyle{definition}

\newtheorem{remark}{Remark}
\newtheorem*{remark*}{Remark}

\DeclareMathOperator*{\argmin}{arg\,min}

\title{A Simple Bootstrap for Chatterjee's Rank Correlation}
\date{\today}
\author{Holger Dette\footnote{Faculty of Mathematics, Ruhr-Universität Bochum, \href{mailto:holger.dette@rub.de}{holger.dette@rub.de}} \and Marius Kroll\footnote{Faculty of Mathematics, Ruhr-Universität Bochum, \href{mailto:marius.kroll@rub.de}{marius.kroll@rub.de}}}
\begin{document}
\maketitle

\begin{abstract}
    We prove that  an $m$ out of $n$ bootstrap procedure for Chatterjee's rank correlation is consistent whenever asymptotic normality of Chatterjee's rank correlation can be established. In particular, we prove that $m$ out of $n$ bootstrap works for continuous as well as for discrete data with independent coordinates; furthermore, simulations indicate that it also performs well for discrete data with dependent coordinates, and that it outperforms alternative estimation methods. Consistency of the bootstrap is proved in the Kolmogorov  as well as  in the Wasserstein distance.\\[2mm]
    \noindent\textbf{MSC2020 Classification:} 62F40, 62G05, 62H20.\\
    \noindent\textbf{Keywords and phrases:} Bootstrap; Measure of association; Rank correlation.

\end{abstract}

\section{Introduction}
\label{sec:introduction}

A fundamental problem in statistics is to measure the strength of the  association between two random variables $X$ and $Y$, and many measures, such as Pearson’s and Spearman’s correlation, Kendall's $\tau$ and Gini's  $\gamma$, have been proposed for this purpose. Most of these measures
are very powerful to detect linear and monotone dependencies, but they
are less sensitive to non-monotone dependencies even in the case where $Y=f(X) $ for some deterministic function $f$.
Numerous authors have made proposals to address this problem
\citep[see, for example, the survey of][]{joseholmes2016}, where, as pointed out by \cite{chatterjee:2021}, a large part of the literature has its focus on testing independence.  In the same paper the author proposes  an estimate of a new measure, which
will be called Chatterjee's rank correlation throughout this paper.
Due to its simple form and appealing properties, it  has found considerable attention since its introduction
\citep[see][among others]{caobickel2020,debghosalsen2020,auddy_et_al:2021,linhan2021,gamboaetal2020,shidrtonhan2021b,shi_et_al:2022}.

To be precise, let  $\left(X_1, Y_1\right), \ldots, \left(X_n, Y_n\right)$ be  a sample of independent identically distributed (iid) observations, denote by $\left(X_{(1)},Y_{(1)}\right), \ldots ,\left(X_{(n)},Y_{(n)}\right)$ the rearranged data such that \mbox{$X_{(1)} \leq \ldots \leq X_{(n)}$} with ties broken at random and let $r_i$ be the rank of $Y_{(i)}$ $(i=1, \ldots , n)$.  \cite{chatterjee:2021} defines the rank coefficient $\xi_n$ by
$$
    \xi_n := \xi_n\left((X_1, Y_1), \ldots, (X_n, Y_n)\right) := 1 - \frac{3\sum_{i=1}^{n-1} |r_{i+1} - r_i|}{n^2 - 1}
$$
if there are no ties among the $Y_1, \ldots, Y_n$, and by
$$
    \xi_n := 1 - \frac{n \sum_{i=1}^{n-1} |r_{i+1} - r_i|}{2\sum_{i=1}^n l_i(n - l_i)}
$$
if  there are ties, where $l_i$ is the number of indices $j$ such that $Y_{(j)} \geq Y_{(i)}$.
\cite{chatterjee:2021}  shows that $\xi_n$ is a consistent estimator of the Dette-Siburg-Stoimenov dependence measure
%\begin{itemize}
%\item $X_{(1)} \leq \ldots \leq X_{(n)}$ is an isotone rearrangement of $X_1, \ldots, X_n$ (with ties broken at random),
%\item $r_i$ is the rank of $Y_{(i)}$, and
%\item $l_i$ is the number of indices $j$ such that $Y_{(j)} \geq Y_{(i)}$.
%\end{itemize}
\begin{align}
    \label{det3}
    \xi := \xi(X, Y) := \frac{\int \mathrm{Var}\left(\mathbb{E}\left[\textbf{1}_{[y, \infty)}(Y) ~|~ X\right]\right) ~\mathrm{d}\mathbb{P}^Y(y)}{\int \mathrm{Var}\left(\textbf{1}_{[y, \infty)}(Y)\right) ~\mathrm{d}\mathbb{P}^Y(y)},
\end{align}
where $(X,Y)$ is a generic vector with the same joint distribution as $(X_1, Y_1)$ and $\mathbb{P}^Y$ denotes the distribution of $Y$. This measure of dependence has been introduced by \cite{dette_siburg_stoimenov:2013}
for continuous distributions using  a representation in terms of copulas
and  is  independently considered by
\cite{chatterjee:2021} in the form \eqref{det3}. The measure $\xi$
has the  desirable properties mentioned above. In particular, it is $0$ if and only if $X$ and $Y$ are independent, and $1$ if and only if $Y = f(X)$ almost surely for some measurable function $f$.

In this article, we focus on one of the properties of Chatterjee's rank correlation, namely its asymptotic normality and estimation of the limiting variance, say $\sigma^2$, if it exists. To be precise,  define
$$
    T_n := \sqrt{n}(\xi_n - \mathbb{E} \xi_n).
$$
In a recent article, \cite{lin_han:2022} prove that, if $(X, Y)$ has a continuous distribution function and if $Y$ cannot be expressed as a measurable function of $X$ almost surely, then
$$
    \frac{T_n}{\mathrm{Var}(T_n)} \xrightarrow[n \to \infty]{\mathcal{D}} \mathcal{N}\left(0, 1\right).
$$
\cite{chatterjee:2021} proves a similar result under the assumption that $X$ and $Y$ are independent; in this case, it can furthermore be shown that $\mathrm{Var}(T_n)$ converges to $2/5$. The general result by \cite{lin_han:2022} requires an impressive amount of technical subtlety and allows for no such explicit calculation of the limit of $\mathrm{Var}(T_n)$.
On the other hand, if $(X, Y)$ follows a discrete distribution and $X$ and $Y$ are independent, then, by Theorem 2.2 in \cite{chatterjee:2021},
$$
    T_n \xrightarrow[n \to \infty]{\mathcal{D}} \mathcal{N}\left(0,\sigma^2\right)
$$
for some $\sigma^2 > 0$. Some other works such as \cite{auddy_et_al:2021}, \cite{shi_et_al:2022} derive similar limiting theorems under special assumptions on the kind of dependence between $X$ and $Y$. However, the limiting variance is generally an unknown quantity that must be estimated, unless $(X, Y)$ has a continuous distribution function \textit{and} $X$ and $Y$ are independent.

A popular tool in such situations is estimating the unknown limiting parameter via a bootstrap method. However, \cite{lin_han:2023} show that the usual bootstrap procedure fails for Chatterjee's rank correlation. Two alternatives remain: First, in the case of independence, \cite{chatterjee:2021} proposes an estimator for the limiting variance (which is only unknown in the case of discrete data). Second, in the case of continuous data, \cite{lin_han:2022} construct a consistent estimator for the unknown limiting variance and \cite{lin_han:2023} investigate the finite sample properties of  this estimator.

Our goal in this article is to establish an estimator for the unknown variance of the statistic  $T_n$ that  is applicable in any situation, regardless of the (in-)dependence between $X$ and $Y$ or the (dis-)continuity of the distribution function of $(X, Y)$. At this point, it is important to point out that, to the best of our knowledge, there is currently no weak convergence theory established for the case where $X$ and $Y$ are discrete and dependent. A more precise and careful formulation of our goal would therefore run as follows: Whenever the original statistic $T_n$ converges in distribution to some centred normal distribution with variance $\sigma^2$, we want our estimator to be consistent for this (usually unknown) limiting variance $\sigma^2$.

We achieve our goal via an $m$ out of $n$ bootstrap, going back to \cite{PolitisRomano1994}
\citep[see also][]{bickel_et_al:1997}.  Our main results are stated in Section  \ref{sec:mains} and show that an appropriately defined $m$ out of $n$ bootstrap can consistently estimate the limiting distribution of $T_n$, whenever $T_n$ is asymptotically normal. This is true in terms of the Kolmogorov distance as well as the Wasserstein distance of order $2$. As a consequence of this, we can use the $m$ out of $n$ bootstrap to estimate the unknown limiting variance of $T_n$. As usual in many applications of $m$ out of $n$ sampling, $m$ has to be of smaller order than $n$, but a stronger condition is required in the case of discrete distributions.

In Section \ref{sec:sims}, we present the results of a comprehensive simulation study to investigate the finite sample properties  of the proposed bootstrap estimator. In all these situations our estimator shows good performance. We also compare it to the estimation methods introduced by \cite{lin_han:2022} as well as the original estimator in \cite{chatterjee:2021}.
Some concluding remarks  are given in Section \ref{sec:conclusion} while  Section \ref{sec:proof} contains the proofs of our main results.

\section{Main Result}
\label{sec:mains}
From the observed sample $(X_1, Y_1), \ldots, (X_n, Y_n)$, we draw $m < n$ independent bootstrap observations $\left(X_1^*, Y_1^*\right), \ldots, \left(X_m^*, Y_m^*\right)$ without replacement and calculate Chatterjee's rank correlation based on this bootstrap sample, denoted by $\xi_m^*$. $\xi_m^*$ is a random variable, with randomness being introduced by the initial sample $(X_1, Y_1), \ldots, (X_n,Y_n)$, as well as the random sampling in the bootstrap process. We then estimate the unknown distribution of the statistic $T_m$ by the distribution of the bootstrap statistic
$$
    T_{m,n}^* := \sqrt{m}\left(\xi_m^* - \mathbb{E}\left[\xi_m^* ~|~ (X_1, Y_1), \ldots, (X_n, Y_n)\right]\right).
$$
Our main result shows that whenever $T_n$ weakly converges to some normal distribution, $T_{m,n}^{*}$ weakly converges in probability to the same limit. In the following statement $\Phi$ denotes the cumulative distribution function of the standard normal distribution.

\begin{theorem}
    \label{thm:mon_bs_u}
    Let $(X_k, Y_k)_{k \in \mathbb{N}}$ be an iid process and suppose that $m \to \infty$ as $n \to \infty$. Suppose further that one of the following two conditions is fulfilled:
    \begin{enumerate}[(i)]
        \item $T_n \xrightarrow[n \to \infty]{\mathcal{D}} \mathcal{N}\left(0,\sigma^2\right)$ for some $\sigma^2 > 0$ and $m = o\left(\sqrt{n}\right)$.
        \item $(X,Y)$ has a continuous distribution function, $Y$ cannot be expressed as a measurable function of $X$ almost surely and $m = o\left(n\right)$.
    \end{enumerate}
    Then it holds that
    $$
        \sup_{x \in \mathbb{R}} \left|F_{m,n}^*(x) - \Phi \Big ( {x \over \sigma}  \Big )\right| \xrightarrow[n \to \infty]{\mathbb{P}} 0,
    $$
    where
    % $F$ and 
    $F_{m,n}^*$
    denotes the cumulative distribution functions of
    %the weak limit $\mathcal{N}\left(0,\sigma^2\right)$ and of the distribution of 
    $T_{m,n}^*$, conditional on $(X_1, Y_1), \ldots, (X_n, Y_n)$, respectively.
\end{theorem}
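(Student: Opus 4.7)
The plan is to decompose the bootstrap statistic into a symmetrized main term and a centering correction, control each, and then upgrade from marginal convergence to convergence of the conditional distribution via a second-moment argument.

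First I would write $T_{m,n}^{*} = \sqrt{m}\bigl(\xi_m^{*} - \mathbb{E}\xi_m\bigr) - R_n$ with $R_n := \sqrt{m}\bigl(\mathbb{E}[\xi_m^{*}\mid (X_1,Y_1),\ldots,(X_n,Y_n)] - \mathbb{E}\xi_m\bigr)$. Because the original sample is iid and the bootstrap draws are without replacement, the subsample $(X_1^{*},Y_1^{*}),\ldots,(X_m^{*},Y_m^{*})$ has the \emph{marginal} distribution of $(X_1,Y_1),\ldots,(X_m,Y_m)$. Consequently $\sqrt{m}(\xi_m^{*} - \mathbb{E}\xi_m)$ is marginally distributed as $T_m$, and under either hypothesis (i) or (ii) applied with $m$ in place of $n$, this converges weakly to $\mathcal{N}(0,\sigma^2)$.

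Next I would control $R_n$. The conditional expectation $\mathbb{E}[\xi_m^{*}\mid\text{data}]$ is a $U$-statistic of order $m$ with symmetric kernel $\xi_m$, so the classical Hoeffding variance bound gives $\mathrm{Var}(\mathbb{E}[\xi_m^{*}\mid\text{data}]) \le \tfrac{m}{n}\,\mathrm{Var}(\xi_m)$, hence $\mathrm{Var}(R_n) \le \tfrac{m^2}{n}\mathrm{Var}(\xi_m)$. In case (i), using only the crude bound $|\xi_m|\le C$ yields $\mathrm{Var}(R_n)=O(m^2/n)=o(1)$ precisely because $m=o(\sqrt{n})$; in case (ii), one exploits the sharper bound $\mathrm{Var}(\xi_m)=O(1/m)$ (available from the Lin--Han theory in the continuous non-degenerate case together with the boundedness of $\xi_m$, which provides uniform integrability) to obtain $\mathrm{Var}(R_n)=O(m/n)=o(1)$ under $m=o(n)$. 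Hence $R_n = o_{\mathbb{P}}(1)$, and combined with the previous step $T_{m,n}^{*}$ converges marginally to $\mathcal{N}(0,\sigma^2)$.

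The third step upgrades marginal convergence to conditional convergence. Fix $x\in\mathbb{R}$; by marginal convergence, $\mathbb{E}\,F_{m,n}^{*}(x) \to \Phi(x/\sigma)$, so since $F_{m,n}^{*}(x)\in[0,1]$ it is enough to prove $\mathrm{Var}(F_{m,n}^{*}(x)) \to 0$. Drawing two \emph{conditionally independent} bootstrap samples from the same data and writing $T_{m,n}^{*(1)}, T_{m,n}^{*(2)}$ for the resulting statistics, this variance equals $\mathbb{P}(T_{m,n}^{*(1)}\le x,\ T_{m,n}^{*(2)}\le x) - \mathbb{P}(T_{m,n}^{*(1)}\le x)\mathbb{P}(T_{m,n}^{*(2)}\le x)$, so it suffices to establish the joint weak convergence of $(T_{m,n}^{*(1)},T_{m,n}^{*(2)})$ to two independent $\mathcal{N}(0,\sigma^2)$ variables. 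Pointwise convergence of $F_{m,n}^{*}$ then automatically extends to uniform convergence by a standard Pólya-type argument using monotonicity together with continuity of $\Phi$.

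The main obstacle will be this joint convergence in case (ii). In case (i) it is essentially free: the two bootstrap subsamples are disjoint with probability at least $1-m^2/n \to 1$, and on the disjointness event they form two independent iid samples of size $m$, so joint convergence follows from the marginal statement applied to each coordinate. Under the weaker condition $m=o(n)$ of case (ii) the expected overlap $m^2/n$ need not vanish, so this clean decoupling fails. Here I would replace $\xi_m^{*}$ by its Hájek projection onto sums of functions of single bootstrap observations, verify that the residual is $o_{\mathbb{P}}(1)$ using the same $U$-statistic variance bound as in Step~2, and then observe that linear functionals of the bootstrap sample are asymptotically independent even with non-vanishing overlap because the overlap contributes at most $O(m^2/n)\cdot O(1/m)=O(m/n)$ to the covariance. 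Making this projection argument rigorous for the specific rank-based form of $\xi_m$ in the continuous case is where the technical work concentrates.
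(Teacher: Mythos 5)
Your overall architecture (decompose $T_{m,n}^*$ into a term that is marginally distributed as $T_m$ plus the centering correction $R_n$, kill $R_n$ by Hoeffding's variance bound for the $U$-statistic $\mathbb{E}[\xi_m^*\mid\text{data}]$, then upgrade to conditional convergence by a second-moment argument and a P\'olya step) is sound and genuinely different from the paper, which instead writes $T_n$ and $T_{m,n}^*$ as functionals of the data and of $Q$ resp.\ $Q_n$ and invokes Theorem 1 of Bickel, G\"otze and van Zwet (1997) to pass to the conditional law; your treatment of $R_n$ in both cases matches the paper's computations (including the use of the Lin--Han bound $\mathrm{Var}(\xi_m)=\mathcal{O}(1/m)$ under (ii)), and your case (i) second-moment step via disjointness of the two subsamples is fine precisely because $m^2/n\to 0$ there. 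However, there is a genuine gap in case (ii). Your plan to prove joint convergence of $(T_{m,n}^{*(1)},T_{m,n}^{*(2)})$ by replacing $\xi_m^*$ with its H\'ajek projection onto single-observation terms, with residual $o_\mathbb{P}(1)$ after $\sqrt{m}$-scaling, is not supported by anything available: the Lin--Han variance bound controls $\mathrm{Var}(\xi_m)$, not the distance of $\sqrt{m}(\xi_m-\mathbb{E}\xi_m)$ from its projection, and the known CLTs for Chatterjee's statistic are proved by local-dependence/nearest-neighbour techniques rather than by asymptotic linearity; indeed an asymptotically negligible projection residual is exactly the kind of structural statement that is \emph{not} known for $\xi_m$ (the failure of the $n$ out of $n$ bootstrap is symptomatic of this). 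The claimed covariance bound ``$\mathcal{O}(m^2/n)\cdot\mathcal{O}(1/m)$'' is likewise heuristic, so as written the case (ii) argument does not close.

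The missing idea is that no joint weak convergence is needed at all. After replacing the random centering by the deterministic one (absorbing $\sqrt{m}(\mathbb{E}[\xi_m^*\mid\text{data}]-\mathbb{E}\xi_m)=o_\mathbb{P}(1)$ via continuity of $\Phi$), the conditional distribution function
$$
    \widetilde{F}_{m,n}(x) \;=\; {n \choose m}^{-1}\!\!\sum_{1\le i_1<\cdots<i_m\le n} \textbf{1}\!\left\{\sqrt{m}\bigl(\xi_m((X_{i_1},Y_{i_1}),\ldots,(X_{i_m},Y_{i_m}))-\mathbb{E}\xi_m\bigr)\le x\right\}
$$
is itself a $U$-statistic of order $m$ with a kernel bounded by $1$, so Hoeffding's Theorem 5.2 gives $\mathrm{Var}(\widetilde{F}_{m,n}(x))\le m/n\to 0$ under $m=o(n)$ alone, while $\mathbb{E}\widetilde{F}_{m,n}(x)=\mathbb{P}(T_m\le x)\to\Phi(x/\sigma)$ by the marginal step; this is the classical Politis--Romano subsampling argument and it handles case (ii) without any projection or decoupling. (The paper reaches the same end by verifying the hypotheses of the Bickel--G\"otze--van Zwet theorem, which packages this computation; its extra $U$- versus $V$-statistic comparison in case (i) is needed only because of the plug-in form of the functional there, and your direct centering at $\mathbb{E}\xi_m$ legitimately avoids it.) With your case (ii) step replaced by this bounded-kernel variance bound, the proposal becomes a correct, and arguably more elementary, proof.
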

\vspace{0mm}

\begin{remark} ~~ \label{rem1}
    \begin{enumerate}[(i)]
        \item It is shown in \cite{lin_han:2022} that condition (ii) in Theorem \ref{thm:mon_bs_u} implies asymptotic normality of $(\xi_n - \mathbb{E}\xi_n)/\sqrt{\mathrm{Var}(\xi_n)}$.
        \item The weak convergence condition in assumption $(i)$ in Theorem \ref{thm:mon_bs_u} is fulfilled if $X$ and $Y$ are independent and discrete \citep[see  Theorem 2.2 in][] {chatterjee:2021}.
        \item To the best of our knowledge, there are so far  no results about the weak convergence  of Chatterjee's rank correlation for discrete but dependent data. However, once such a result  is  established, Theorem \ref{thm:mon_bs_u} $(i)$ will be applicable to this case.
        \item Under assumption $(ii)$ in Theorem \ref{thm:mon_bs_u}, we can allow for the more relaxed growth rate $m = o(n)$ instead of $m = o\left(\sqrt{n}\right)$. This is a consequence of a bound on the variance of Chatterjee's rank correlation that has been establish by  \cite{lin_han:2022}. More specifically, these authors prove that under the assumptions of $(ii)$, it holds that
              $$
                  \mathrm{Var}(\xi_n((X_1, Y_1), \ldots, (X_n, Y_n))) = \mathcal{O}\left(\frac{1}{n}\right).
              $$
              In order to relax the growth rate $m = o\left(\sqrt{n}\right)$ in Theorem \ref{thm:mon_bs_u} $(i)$, one would need to find a similar bound for the variance of $\xi_n\left((X_{i_1}, Y_{i_1}), \ldots, (X_{i_n}, Y_{i_n})\right)$ in the case where $1 \leq i_1, \ldots, i_n \leq n$ is a collection of not necessarily distinct indices. However, as shown in the following section, even the rate of $m = o\left(\sqrt{n}\right)$ achieves good results in our simulation studies.
        \item An analogous result to Theorem \ref{thm:mon_bs_u} $(i)$ can be established for the measure of conditional dependence introduced by \cite{azadkia_chatterjee:2021}.
    \end{enumerate}
\end{remark}

We can establish a stronger version of Theorem \ref{thm:mon_bs_u} under an additional assumption on $T_n$. For this, recall the definition of the Wasserstein metric $d_p$ of order $p$ on the space of probability measures,
$$
    d_p^p(\eta, \xi) := \inf_{\gamma \in \Gamma} \int |x-y|^p ~\mathrm{d}\gamma(x,y),
$$
where  $\eta$ and $\xi$ are two probability measures and $\Gamma$ is the set of all joint distributions with marginals $\eta$ and $\xi$ (also called \textit{couplings} of $\eta$ and $\xi$). In the following discussion $\mathcal{L}(U)$ denotes the distribution of a random variable $U$.

\begin{theorem}
    \label{thm:konvergenz_wasserstein_metrik}
    Suppose that the conditions of Theorem \ref{thm:mon_bs_u} are satisfied and that the sequence $(T_n)_{n \in \mathbb{N}}$ is uniformly square-integrable.
    Then it holds that
    $$
        d_2\left(\mathcal{L}\left(T_{m,n}^*\right), \mathcal{N}\left(0, \sigma^2\right)\right)
        \xrightarrow[n \to \infty]{} 0.
    $$
\end{theorem}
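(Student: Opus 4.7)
The plan is to decompose $T^{*}_{m,n}$ into a summand whose unconditional law already matches the target and a negligible conditional mean correction, and then to combine the resulting $d_2$-estimates via the triangle inequality. Writing $\mathcal{F}_n$ for the $\sigma$-algebra generated by $(X_1,Y_1),\ldots,(X_n,Y_n)$, I would work with
\begin{equation*}
T^{*}_{m,n} \;=\; A_n - B_n, \qquad A_n := \sqrt{m}\,(\xi^{*}_m - \mathbb{E}\xi_m), \qquad B_n := \sqrt{m}\,\bigl(\mathbb{E}[\xi^{*}_m \mid \mathcal{F}_n] - \mathbb{E}\xi_m\bigr).
\end{equation*}

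The first observation is that, since the data are iid and the bootstrap is drawn without replacement, the \emph{unconditional} joint distribution of $(X_1^{*},Y_1^{*}),\ldots,(X_m^{*},Y_m^{*})$ coincides with that of $m$ independent copies of $(X_1,Y_1)$; consequently, $A_n$ has the same unconditional distribution as $T_m$. Under (i) this directly gives $A_n \xrightarrow{\mathcal{D}} \mathcal{N}(0,\sigma^2)$, and under (ii) the same conclusion follows from \cite{lin_han:2022} (cf. Remark \ref{rem1}(i)). Since the uniform square-integrability of $(T_n)$ passes to $(A_n)$, it combines with this weak convergence to give $\mathbb{E}A_n^2 \to \sigma^2$. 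Because convergence in $d_2$ on $\mathbb{R}$ is equivalent to weak convergence plus convergence of second moments, we obtain $d_2(\mathcal{L}(A_n),\mathcal{N}(0,\sigma^2)) \to 0$.

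The second observation is that, by permutation invariance of $\xi_m$, the conditional expectation $\mathbb{E}[\xi^{*}_m \mid \mathcal{F}_n]$ coincides with the U-statistic
\begin{equation*}
U_n \;:=\; \binom{n}{m}^{-1}\sum_{S\subset\{1,\ldots,n\},\,|S|=m} \xi_m\bigl((X_i,Y_i)_{i\in S}\bigr),
\end{equation*}
to which Hoeffding's classical variance bound $\mathrm{Var}(U_n) \leq C\,m\,\mathrm{Var}(\xi_m)/n$ applies, giving $\mathbb{E}B_n^2 = m\,\mathrm{Var}(U_n) \leq C m^2\,\mathrm{Var}(\xi_m)/n$. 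Under (i), the boundedness of $\xi_m$ yields $\mathrm{Var}(\xi_m)\leq 1$, so the rate $m=o(\sqrt{n})$ implies $\mathbb{E}B_n^2 \to 0$. Under (ii), the sharper variance estimate $\mathrm{Var}(\xi_m)=\mathcal{O}(1/m)$ from \cite{lin_han:2022} (recalled in Remark \ref{rem1}(iv)) upgrades the bound to $\mathbb{E}B_n^2=\mathcal{O}(m/n)\to 0$ under $m=o(n)$.

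To assemble the pieces, since $T^{*}_{m,n}$ and $A_n$ are defined on the same probability space, the identity coupling yields
\begin{equation*}
d_2\bigl(\mathcal{L}(T^{*}_{m,n}),\mathcal{L}(A_n)\bigr) \;\leq\; \|T^{*}_{m,n} - A_n\|_{L^2} \;=\; \|B_n\|_{L^2} \;\to\; 0,
\end{equation*}
and the triangle inequality for $d_2$ together with the first step concludes the proof. The main technical ingredient is the Hoeffding variance bound for the U-statistic $U_n$ whose kernel $\xi_m$ has degree $m$ growing with $n$; this is a structural property valid for any symmetric kernel and follows from Hoeffding's representation of $U_n$ as an average, over permutations of $\{1,\ldots,n\}$, of sums of $\lfloor n/m\rfloor$ independent evaluations of the kernel, so no genuine difficulty arises from the $n$-dependence of $\xi_m$.
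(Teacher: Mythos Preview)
Your proof is correct and follows the same overall strategy as the paper: split off the centering correction $B_n=\sqrt{m}(U_{m,n}-\mathbb{E}\xi_m)$, control it in $L^2$ via Hoeffding's variance inequality (using $\mathrm{Var}(\xi_m)=\mathcal{O}(1/m)$ under (ii) and boundedness under (i)), and use uniform square-integrability of $(T_m)$ to get $d_2$-convergence of the main part to $\mathcal{N}(0,\sigma^2)$.

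The difference lies in how the ``main part'' is identified. The paper writes $T_{m,n}^*=f(s(Z))$ for a deterministic $f$ and a random permutation $s$, and then invokes three auxiliary results (a permutation lemma, a disintegration lemma, and a corollary on Wasserstein infima) to argue that $d_2(\mathcal{L}(f(s(Z))),\mathcal{N}(0,\sigma^2))=d_2(\mathcal{L}(f(Z)),\mathcal{N}(0,\sigma^2))$, after which $f(Z)$ is coupled with $T_m$ on the original sample. You instead observe directly that the unconditional law of a size-$m$ subsample drawn without replacement from an iid sample is that of $m$ fresh iid observations, so $\mathcal{L}(A_n)=\mathcal{L}(T_m)$ outright, and you couple $T_{m,n}^*$ with $A_n$ via the identity. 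This bypasses the paper's Lemmas~\ref{lem:wasserstein_permutation}, \ref{lem:darstellungslemma} and Corollary~\ref{cor:wasserstein_inf} entirely; the disintegration machinery there is effectively re-deriving the exchangeability fact you use in one line. Your route is therefore more elementary while yielding the same conclusion.
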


\begin{remark}   \label{wassersteinremark}
    Under assumption $(ii)$ of Theorem \ref{thm:mon_bs_u} the uniform integrability condition of Theorem \ref{thm:konvergenz_wasserstein_metrik} is always satisfied. We prove this claim in Section \ref{subsec:proof_thm_wasserstein}.
\end{remark}

As convergence in $d_p$ is equivalent to weak convergence combined with convergence of the $p$-th moments or to weak convergence combined with uniform $p$-integrability  \citep[cf.\@ Theorem 6.9 in][]{villani:optimaltransport}, Theorem \ref{thm:konvergenz_wasserstein_metrik} tells us that the $m$ out of $n$ bootstrap is consistent in the Wasserstein metric $d_2$ if the same is true for the original statistic $T_n$. In other words, in terms of variance estimation, the $m$ out of $n$ bootstrap is as good as the original sequence $T_n$. This implies the following corollary.

\begin{corollary}
    \label{cor:pte-momente}
    Suppose that the conditions of Theorem \ref{thm:konvergenz_wasserstein_metrik} are satisfied. Let $f : \mathbb{R} \to \mathbb{R}$ be a continuous function such that $|f(x)| \leq C (1 + x^2)$ holds for all $x \in \mathbb{R}$ for some fixed $C > 0$. Then it holds that
    $$
        \mathbb{E}\left[f\left(T_{m,n}^*\right)\right] \xrightarrow[n \to \infty]{} \mathbb{E}\left[f(\sigma Z)\right],
    $$
    where $Z \sim \mathcal{N}(0,1)$ and $\sigma^2 > 0$ is the unknown limiting variance from Theorem \ref{thm:mon_bs_u}.
\end{corollary}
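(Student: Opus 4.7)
The plan is to read the corollary as a direct application of the equivalence between convergence in the Wasserstein $d_2$-metric and weak convergence together with uniform $2$-integrability \citep[see Theorem 6.9 in][]{villani:optimaltransport}, which has already been invoked in the discussion preceding the corollary. So nothing substantial remains beyond cleanly assembling these ingredients.

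Concretely, I would proceed as follows. First, Theorem \ref{thm:konvergenz_wasserstein_metrik} gives $d_2\bigl(\mathcal{L}(T_{m,n}^*), \mathcal{N}(0,\sigma^2)\bigr) \to 0$, and the cited equivalence from Villani yields two things simultaneously: the weak convergence $T_{m,n}^* \xrightarrow{\mathcal{D}} \sigma Z$ and the uniform $2$-integrability of the sequence $(T_{m,n}^*)_{n \in \mathbb{N}}$, i.e.\
$$
\lim_{R \to \infty} \limsup_{n \to \infty} \mathbb{E}\bigl[(T_{m,n}^*)^2 \, \mathbf{1}_{\{|T_{m,n}^*| > R\}}\bigr] = 0.
$$
Second, since $f$ is continuous, the continuous mapping theorem gives $f(T_{m,n}^*) \xrightarrow{\mathcal{D}} f(\sigma Z)$.

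Third, I would use the quadratic growth bound $|f(x)| \leq C(1+x^2)$ to transfer uniform integrability from $(T_{m,n}^*)^2$ to $f(T_{m,n}^*)$: for any $K>0$, on the event $\{|f(T_{m,n}^*)| > K\}$ one has $1+(T_{m,n}^*)^2 > K/C$, which forces $|T_{m,n}^*|>R(K) := \sqrt{K/C - 1}$ (for $K$ large enough), and hence
$$
\mathbb{E}\bigl[|f(T_{m,n}^*)| \, \mathbf{1}_{\{|f(T_{m,n}^*)|>K\}}\bigr] \leq C\,\mathbb{P}\bigl(|T_{m,n}^*|>R(K)\bigr) + C\,\mathbb{E}\bigl[(T_{m,n}^*)^2 \, \mathbf{1}_{\{|T_{m,n}^*|>R(K)\}}\bigr].
$$
Both terms become uniformly small in $n$ as $K \to \infty$, thanks to the uniform $2$-integrability and Markov's inequality. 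Finally, weak convergence of $f(T_{m,n}^*)$ together with its uniform integrability yields $\mathbb{E}[f(T_{m,n}^*)] \to \mathbb{E}[f(\sigma Z)]$, which is the claim.

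There is no real obstacle here; the corollary is essentially a bookkeeping exercise on top of Theorem \ref{thm:konvergenz_wasserstein_metrik}. The only point requiring a sentence of care is the implication from uniform $2$-integrability of $(T_{m,n}^*)$ to uniform integrability of $(f(T_{m,n}^*))$, which is why the growth condition on $f$ is stated as $|f(x)| \leq C(1+x^2)$ rather than anything stronger.
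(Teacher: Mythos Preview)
Your proposal is correct and follows the same route the paper indicates: the corollary is stated without a separate proof, being presented as an immediate consequence of Theorem~\ref{thm:konvergenz_wasserstein_metrik} together with Theorem~6.9 in \cite{villani:optimaltransport}. Your write-up spells this out via the uniform-integrability characterisation; one could shorten it slightly by invoking directly the equivalence in Villani's theorem that $d_2$-convergence is the same as convergence of $\int \varphi\,\mathrm{d}\mu_k$ for all continuous $\varphi$ with $|\varphi(x)| \leq C(1+|x|^2)$, which is exactly the class of test functions in the corollary.
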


By choosing $f(x) = x^2$, we obtain convergence of the second moments. On the basis of this, we can estimate the unknown limiting variance itself by the second moments of our bootstrap statistic, i.e.\@ by
\begin{equation}
    \label{det0}
    \sigma_{m,n}^{*2} := \mathrm{Var}\left(\sqrt{m}\,\xi_m^* ~|~ (X_1, Y_1), \ldots, (X_n, Y_n)\right).
\end{equation}

\begin{remark}
    \label{det10}
    {\rm
        Let us consider the time complexity of the $m$ out of $n$ bootstrap for the choice  $m = \lfloor n^\gamma \rfloor$ for some parameter $0 < \gamma < 1$. By Remark 1.4 in \cite{chatterjee:2021} the calculation of the statistic from the  bootstrap sample (of size $m$) has a time complexity  of order $\mathcal{O}(m \log m)$. Moreover, drawing a sample of size $m$ from $n$ observations can be done by algorithms with time complexity of order $\mathcal{O}(m \log m)$ \citep[see, for example,][]{ting2021simple}. Thus the time complexity of the $m$ out of $n$ bootstrap is  of order $\mathcal{O}(B m \log m )$, where $B$ is the number of bootstrap iterations. Therefore, if $m = o(n)$, the time complexity is sub-linear in $n$ up to a logarithmic factor. Note that the number of bootstrap iterations is determined by the required precision for estimating the bootstrap distribution and, in applications, is usually chosen as some large but fixed number. Even if we would increase $B$ with the sample size such that $B= \lfloor n/m \rfloor $, the time complexity would be linear in $n$ up to a logarithmic factor (but we do not recommend this). Asymptotically, the time complexity of the $m$ out of $n$ bootstrap is therefore not worse than that of Chatterjee's rank correlation itself, which is $\mathcal{O}(n \log n)$.
    }
\end{remark}

\section{Finite sample properties}\label{sec:sims}

To assess the finite sample performance of the proposed $m$ out of $n$ bootstrap procedure, we conduct a simulation study
to provide answers to the following  questions:
\begin{enumerate}
    \item[(1)] What are good choices of $m$?
    \item[(2)]  How does the $m$ out of $n$ bootstrap compare to the `Lin-Han-estimator', i.e.\@ the variance estimator introduced by \cite{lin_han:2022}?
    \item[(3)] How well does the $m$ out of $n$ bootstrap perform for non-continuous sample data, where no other estimation methods are available?
    \item[(4)]  What is the time complexity of the $m$ out of $n$ bootstrap?
\end{enumerate}

\begin{table}[b]
    \centering\small
    \begin{tabular}{c c c c c c c c c c}
               &                & \multicolumn{2}{c}{Poisson} &            & \multicolumn{2}{c}{Gaussian} &       & \multicolumn{2}{c}{$t(3)$-distribution}                                       \\
        \cline{3-4} \cline{6-7} \cline{9-10}                                                                                                                                                      \\
        $\rho$ & \hspace{0.5cm} & $\xi$                       & $\sigma^2$ & \hspace{0.3cm}               & $\xi$ & $\sigma^2$                              & \hspace{0.3cm} & $\xi$ & $\sigma^2$ \\
        \hline
        0.0    & \hspace{0.5cm} & 0.00                        & 0.46       & \hspace{0.3cm}               & 0.00  & 0.40                                    & \hspace{0.3cm} & 0.00  & 0.42       \\
        0.3    & \hspace{0.5cm} & 0.06                        & 0.49       & \hspace{0.3cm}               & 0.05  & 0.46                                    & \hspace{0.3cm} & 0.06  & 0.50       \\
        0.5    & \hspace{0.5cm} & 0.18                        & 0.54       & \hspace{0.3cm}               & 0.14  & 0.51                                    & \hspace{0.3cm} & 0.15  & 0.58       \\
        0.7    & \hspace{0.5cm} & 0.23                        & 0.50       & \hspace{0.3cm}               & 0.30  & 0.49                                    & \hspace{0.3cm} & 0.30  & 0.59       \\
        0.9    & \hspace{0.5cm} & 0.65                        & 0.20       & \hspace{0.3cm}               & 0.58  & 0.23                                    & \hspace{0.3cm} & 0.58  & 0.32       \\
        \hline
    \end{tabular}
    \caption{\it  Population version of Chatterjee's rank correlation ($\xi$) and the limiting variance ($\sigma^2$) of the statistic $T_n$. These values have been generated by simulation for discrete and continuous models.}
    \label{tab:limiting-values}
\end{table}

\begin{figure}[t]
    \fontsize{8}{10}\selectfont
    % svg Code Start
    %% Creator: Inkscape 1.2.2 (b0a8486541, 2022-12-01), www.inkscape.org
    %% PDF/EPS/PS + LaTeX output extension by Johan Engelen, 2010
    %% Accompanies image file '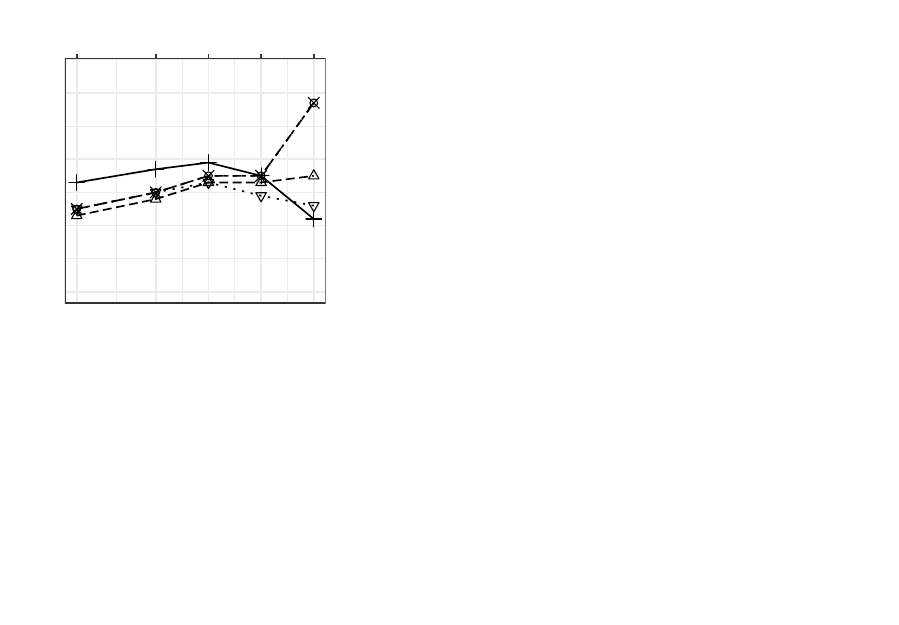' (pdf, eps, ps)
    %%
    %% To include the image in your LaTeX document, write
    %%   \input{<filename>.pdf_tex}
    %%  instead of
    %%   \includegraphics{<filename>.pdf}
    %% To scale the image, write
    %%   \def\svgwidth{<desired width>}
    %%   \input{<filename>.pdf_tex}
    %%  instead of
    %%   \includegraphics[width=<desired width>]{<filename>.pdf}
    %%
    %% Images with a different path to the parent latex file can
    %% be accessed with the `import' package (which may need to be
    %% installed) using
    %%   \usepackage{import}
    %% in the preamble, and then including the image with
    %%   \import{<path to file>}{<filename>.pdf_tex}
    %% Alternatively, one can specify
    %%   \graphicspath{{<path to file>/}}
    %% 
    %% For more information, please see info/svg-inkscape on CTAN:
    %%   http://tug.ctan.org/tex-archive/info/svg-inkscape
    %%
    \begingroup%
    \makeatletter%
    \providecommand\color[2][]{%
        \errmessage{(Inkscape) Color is used for the text in Inkscape, but the package 'color.sty' is not loaded}%
        \renewcommand\color[2][]{}%
    }%
    \providecommand\transparent[1]{%
        \errmessage{(Inkscape) Transparency is used (non-zero) for the text in Inkscape, but the package 'transparent.sty' is not loaded}%
        \renewcommand\transparent[1]{}%
    }%
    \providecommand\rotatebox[2]{#2}%
    \newcommand*\fsize{\dimexpr\f@size pt\relax}%
    \newcommand*\lineheight[1]{\fontsize{\fsize}{#1\fsize}\selectfont}%
    \ifx\svgwidth\undefined%
        \setlength{\unitlength}{431.25bp}%
        \ifx\svgscale\undefined%
            \relax%
        \else%
            \setlength{\unitlength}{\unitlength * \real{\svgscale}}%
        \fi%
    \else%
        \setlength{\unitlength}{\svgwidth}%
    \fi%
    \global\let\svgwidth\undefined%
    \global\let\svgscale\undefined%
    \makeatother%
    \begin{picture}(1,0.69565217)%
        \lineheight{1}%
        \setlength\tabcolsep{0pt}%
        \put(0,0){\includegraphics[width=\unitlength,page=1]{plot_normdata_svg-tex.pdf}}%
        \put(0.08544348,0.63895652){\color[rgb]{0.30196078,0.30196078,0.30196078}\makebox(0,0)[t]{\lineheight{1.25}\smash{\begin{tabular}[t]{c}0.0\end{tabular}}}}%
        \put(0.1733913,0.63895652){\color[rgb]{0.30196078,0.30196078,0.30196078}\makebox(0,0)[t]{\lineheight{1.25}\smash{\begin{tabular}[t]{c}0.3\end{tabular}}}}%
        \put(0.23199999,0.63895652){\color[rgb]{0.30196078,0.30196078,0.30196078}\makebox(0,0)[t]{\lineheight{1.25}\smash{\begin{tabular}[t]{c}0.5\end{tabular}}}}%
        \put(0.29062609,0.63895652){\color[rgb]{0.30196078,0.30196078,0.30196078}\makebox(0,0)[t]{\lineheight{1.25}\smash{\begin{tabular}[t]{c}0.7\end{tabular}}}}%
        \put(0.34925219,0.63895652){\color[rgb]{0.30196078,0.30196078,0.30196078}\makebox(0,0)[t]{\lineheight{1.25}\smash{\begin{tabular}[t]{c}0.9\end{tabular}}}}%
        \put(0.06368696,0.36528694){\color[rgb]{0.30196078,0.30196078,0.30196078}\makebox(0,0)[rt]{\lineheight{1.25}\smash{\begin{tabular}[t]{r}0.0\end{tabular}}}}%
        \put(0.06368696,0.43909564){\color[rgb]{0.30196078,0.30196078,0.30196078}\makebox(0,0)[rt]{\lineheight{1.25}\smash{\begin{tabular}[t]{r}0.2\end{tabular}}}}%
        \put(0.06368696,0.51290434){\color[rgb]{0.30196078,0.30196078,0.30196078}\makebox(0,0)[rt]{\lineheight{1.25}\smash{\begin{tabular}[t]{r}0.4\end{tabular}}}}%
        \put(0.06368696,0.58671304){\color[rgb]{0.30196078,0.30196078,0.30196078}\makebox(0,0)[rt]{\lineheight{1.25}\smash{\begin{tabular}[t]{r}0.6\end{tabular}}}}%
        \put(0,0){\includegraphics[width=\unitlength,page=2]{plot_normdata_svg-tex.pdf}}%
        \put(0.03271304,0.49438261){\rotatebox{90}{\makebox(0,0)[t]{\lineheight{1.25}\smash{\begin{tabular}[t]{c}rRMSE\end{tabular}}}}}%
        \put(0.36245218,0.33793043){\makebox(0,0)[rt]{\lineheight{1.25}\smash{\begin{tabular}[t]{r}$n =50$\end{tabular}}}}%
        \put(0,0){\includegraphics[width=\unitlength,page=3]{plot_normdata_svg-tex.pdf}}%
        \put(0.39469565,0.63895652){\color[rgb]{0.30196078,0.30196078,0.30196078}\makebox(0,0)[t]{\lineheight{1.25}\smash{\begin{tabular}[t]{c}0.0\end{tabular}}}}%
        \put(0.4826261,0.63895652){\color[rgb]{0.30196078,0.30196078,0.30196078}\makebox(0,0)[t]{\lineheight{1.25}\smash{\begin{tabular}[t]{c}0.3\end{tabular}}}}%
        \put(0.54125218,0.63895652){\color[rgb]{0.30196078,0.30196078,0.30196078}\makebox(0,0)[t]{\lineheight{1.25}\smash{\begin{tabular}[t]{c}0.5\end{tabular}}}}%
        \put(0.59987825,0.63895652){\color[rgb]{0.30196078,0.30196078,0.30196078}\makebox(0,0)[t]{\lineheight{1.25}\smash{\begin{tabular}[t]{c}0.7\end{tabular}}}}%
        \put(0.65850437,0.63895652){\color[rgb]{0.30196078,0.30196078,0.30196078}\makebox(0,0)[t]{\lineheight{1.25}\smash{\begin{tabular}[t]{c}0.9\end{tabular}}}}%
        \put(0.67168696,0.33793043){\makebox(0,0)[rt]{\lineheight{1.25}\smash{\begin{tabular}[t]{r}$n =100$\end{tabular}}}}%
        \put(0,0){\includegraphics[width=\unitlength,page=4]{plot_normdata_svg-tex.pdf}}%
        \put(0.70394781,0.63895652){\color[rgb]{0.30196078,0.30196078,0.30196078}\makebox(0,0)[t]{\lineheight{1.25}\smash{\begin{tabular}[t]{c}0.0\end{tabular}}}}%
        \put(0.79187824,0.63895652){\color[rgb]{0.30196078,0.30196078,0.30196078}\makebox(0,0)[t]{\lineheight{1.25}\smash{\begin{tabular}[t]{c}0.3\end{tabular}}}}%
        \put(0.85050436,0.63895652){\color[rgb]{0.30196078,0.30196078,0.30196078}\makebox(0,0)[t]{\lineheight{1.25}\smash{\begin{tabular}[t]{c}0.5\end{tabular}}}}%
        \put(0.90913044,0.63895652){\color[rgb]{0.30196078,0.30196078,0.30196078}\makebox(0,0)[t]{\lineheight{1.25}\smash{\begin{tabular}[t]{c}0.7\end{tabular}}}}%
        \put(0.96775656,0.63895652){\color[rgb]{0.30196078,0.30196078,0.30196078}\makebox(0,0)[t]{\lineheight{1.25}\smash{\begin{tabular}[t]{c}0.9\end{tabular}}}}%
        \put(0.98093909,0.33793043){\makebox(0,0)[rt]{\lineheight{1.25}\smash{\begin{tabular}[t]{r}$n =500$\end{tabular}}}}%
        \put(0,0){\includegraphics[width=\unitlength,page=5]{plot_normdata_svg-tex.pdf}}%
        \put(0.06368696,0.05010434){\color[rgb]{0.30196078,0.30196078,0.30196078}\makebox(0,0)[rt]{\lineheight{1.25}\smash{\begin{tabular}[t]{r}0.0\end{tabular}}}}%
        \put(0.06368696,0.12391304){\color[rgb]{0.30196078,0.30196078,0.30196078}\makebox(0,0)[rt]{\lineheight{1.25}\smash{\begin{tabular}[t]{r}0.2\end{tabular}}}}%
        \put(0.06368696,0.19772174){\color[rgb]{0.30196078,0.30196078,0.30196078}\makebox(0,0)[rt]{\lineheight{1.25}\smash{\begin{tabular}[t]{r}0.4\end{tabular}}}}%
        \put(0.06368696,0.27153044){\color[rgb]{0.30196078,0.30196078,0.30196078}\makebox(0,0)[rt]{\lineheight{1.25}\smash{\begin{tabular}[t]{r}0.6\end{tabular}}}}%
        \put(0,0){\includegraphics[width=\unitlength,page=6]{plot_normdata_svg-tex.pdf}}%
        \put(0.03271304,0.1792){\rotatebox{90}{\makebox(0,0)[t]{\lineheight{1.25}\smash{\begin{tabular}[t]{c}rRMSE\end{tabular}}}}}%
        \put(0.36245218,0.0227478){\makebox(0,0)[rt]{\lineheight{1.25}\smash{\begin{tabular}[t]{r}$n =1000$\end{tabular}}}}%
        \put(0,0){\includegraphics[width=\unitlength,page=7]{plot_normdata_svg-tex.pdf}}%
        \put(0.37198261,0.65833044){\makebox(0,0)[t]{\lineheight{1.25}\smash{\begin{tabular}[t]{c}$\rho$\end{tabular}}}}%
        \put(0.83584345,0.65833044){\makebox(0,0)[t]{\lineheight{1.25}\smash{\begin{tabular}[t]{c}$\rho$\end{tabular}}}}%
        \put(0.67168696,0.0227478){\makebox(0,0)[rt]{\lineheight{1.25}\smash{\begin{tabular}[t]{r}$n =5000$\end{tabular}}}}%
        \put(0,0){\includegraphics[width=\unitlength,page=8]{plot_normdata_svg-tex.pdf}}%
        \put(0.76441741,0.25226087){\makebox(0,0)[lt]{\lineheight{1.25}\smash{\begin{tabular}[t]{l}Choice of $m$\end{tabular}}}}%
        \put(0,0){\includegraphics[width=\unitlength,page=9]{plot_normdata_svg-tex.pdf}}%
        \put(0.80399998,0.21994783){\makebox(0,0)[lt]{\lineheight{1.25}\smash{\begin{tabular}[t]{l}$m = n^{1/2}$\end{tabular}}}}%
        \put(0.80399998,0.18989566){\makebox(0,0)[lt]{\lineheight{1.25}\smash{\begin{tabular}[t]{l}$m = n^{1/4}$\end{tabular}}}}%
        \put(0.80399998,0.15984348){\makebox(0,0)[lt]{\lineheight{1.25}\smash{\begin{tabular}[t]{l}$m = n^{3/4}$\end{tabular}}}}%
        \put(0.80399998,0.12979131){\makebox(0,0)[lt]{\lineheight{1.25}\smash{\begin{tabular}[t]{l}Bickel \& Sakov\end{tabular}}}}%
        \put(0.80399998,0.09973914){\makebox(0,0)[lt]{\lineheight{1.25}\smash{\begin{tabular}[t]{l}Cluster\end{tabular}}}}%
    \end{picture}%
    \endgroup%
    % svg Code End
    % \includesvg[]{plot_normdata}
    \caption{\it Relative empirical root mean square error (rRMSE, \eqref{eq:rel_rmse}) of the bootstrap variance estimate \eqref{det0} for different methods of choosing the sample size $m = m(n)$. The data are generated from  a two-dimensional centred normal distribution with covariance matrix \eqref{det2} and different correlations $\rho$.}
    \label{fig:vergleich_m}
\end{figure}

\begin{figure}[t]
    \fontsize{8}{10}\selectfont
    % svg Code Start
    %% Creator: Inkscape 1.2.2 (b0a8486541, 2022-12-01), www.inkscape.org
    %% PDF/EPS/PS + LaTeX output extension by Johan Engelen, 2010
    %% Accompanies image file '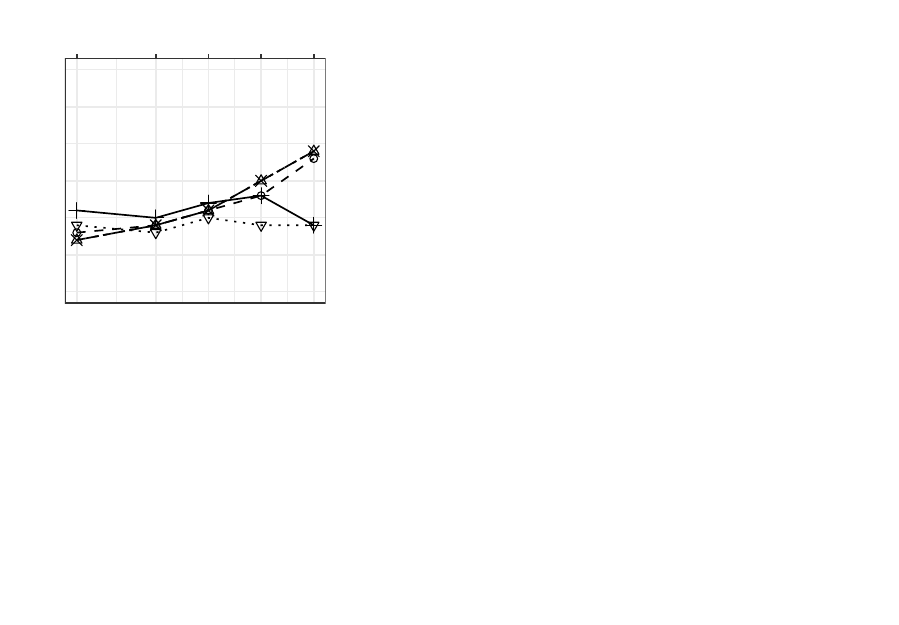' (pdf, eps, ps)
    %%
    %% To include the image in your LaTeX document, write
    %%   \input{<filename>.pdf_tex}
    %%  instead of
    %%   \includegraphics{<filename>.pdf}
    %% To scale the image, write
    %%   \def\svgwidth{<desired width>}
    %%   \input{<filename>.pdf_tex}
    %%  instead of
    %%   \includegraphics[width=<desired width>]{<filename>.pdf}
    %%
    %% Images with a different path to the parent latex file can
    %% be accessed with the `import' package (which may need to be
    %% installed) using
    %%   \usepackage{import}
    %% in the preamble, and then including the image with
    %%   \import{<path to file>}{<filename>.pdf_tex}
    %% Alternatively, one can specify
    %%   \graphicspath{{<path to file>/}}
    %% 
    %% For more information, please see info/svg-inkscape on CTAN:
    %%   http://tug.ctan.org/tex-archive/info/svg-inkscape
    %%
    \begingroup%
    \makeatletter%
    \providecommand\color[2][]{%
        \errmessage{(Inkscape) Color is used for the text in Inkscape, but the package 'color.sty' is not loaded}%
        \renewcommand\color[2][]{}%
    }%
    \providecommand\transparent[1]{%
        \errmessage{(Inkscape) Transparency is used (non-zero) for the text in Inkscape, but the package 'transparent.sty' is not loaded}%
        \renewcommand\transparent[1]{}%
    }%
    \providecommand\rotatebox[2]{#2}%
    \newcommand*\fsize{\dimexpr\f@size pt\relax}%
    \newcommand*\lineheight[1]{\fontsize{\fsize}{#1\fsize}\selectfont}%
    \ifx\svgwidth\undefined%
        \setlength{\unitlength}{431.25bp}%
        \ifx\svgscale\undefined%
            \relax%
        \else%
            \setlength{\unitlength}{\unitlength * \real{\svgscale}}%
        \fi%
    \else%
        \setlength{\unitlength}{\svgwidth}%
    \fi%
    \global\let\svgwidth\undefined%
    \global\let\svgscale\undefined%
    \makeatother%
    \begin{picture}(1,0.69565217)%
        \lineheight{1}%
        \setlength\tabcolsep{0pt}%
        \put(0,0){\includegraphics[width=\unitlength,page=1]{plot_normdata_covering_svg-tex.pdf}}%
        \put(0.08544348,0.63895652){\color[rgb]{0.30196078,0.30196078,0.30196078}\makebox(0,0)[t]{\lineheight{1.25}\smash{\begin{tabular}[t]{c}0.0\end{tabular}}}}%
        \put(0.1733913,0.63895652){\color[rgb]{0.30196078,0.30196078,0.30196078}\makebox(0,0)[t]{\lineheight{1.25}\smash{\begin{tabular}[t]{c}0.3\end{tabular}}}}%
        \put(0.23199999,0.63895652){\color[rgb]{0.30196078,0.30196078,0.30196078}\makebox(0,0)[t]{\lineheight{1.25}\smash{\begin{tabular}[t]{c}0.5\end{tabular}}}}%
        \put(0.29062609,0.63895652){\color[rgb]{0.30196078,0.30196078,0.30196078}\makebox(0,0)[t]{\lineheight{1.25}\smash{\begin{tabular}[t]{c}0.7\end{tabular}}}}%
        \put(0.34925219,0.63895652){\color[rgb]{0.30196078,0.30196078,0.30196078}\makebox(0,0)[t]{\lineheight{1.25}\smash{\begin{tabular}[t]{c}0.9\end{tabular}}}}%
        \put(0.06368696,0.40650436){\color[rgb]{0.30196078,0.30196078,0.30196078}\makebox(0,0)[rt]{\lineheight{1.25}\smash{\begin{tabular}[t]{r}0.0\end{tabular}}}}%
        \put(0.06368696,0.48892173){\color[rgb]{0.30196078,0.30196078,0.30196078}\makebox(0,0)[rt]{\lineheight{1.25}\smash{\begin{tabular}[t]{r}0.1\end{tabular}}}}%
        \put(0.06368696,0.57133912){\color[rgb]{0.30196078,0.30196078,0.30196078}\makebox(0,0)[rt]{\lineheight{1.25}\smash{\begin{tabular}[t]{r}0.2\end{tabular}}}}%
        \put(0,0){\includegraphics[width=\unitlength,page=2]{plot_normdata_covering_svg-tex.pdf}}%
        \put(0.03271304,0.49438261){\rotatebox{90}{\makebox(0,0)[t]{\lineheight{1.25}\smash{\begin{tabular}[t]{c}$0.95 -$Covering\end{tabular}}}}}%
        \put(0.36245218,0.33793043){\makebox(0,0)[rt]{\lineheight{1.25}\smash{\begin{tabular}[t]{r}$n = 50$\end{tabular}}}}%
        \put(0,0){\includegraphics[width=\unitlength,page=3]{plot_normdata_covering_svg-tex.pdf}}%
        \put(0.39469565,0.63895652){\color[rgb]{0.30196078,0.30196078,0.30196078}\makebox(0,0)[t]{\lineheight{1.25}\smash{\begin{tabular}[t]{c}0.0\end{tabular}}}}%
        \put(0.4826261,0.63895652){\color[rgb]{0.30196078,0.30196078,0.30196078}\makebox(0,0)[t]{\lineheight{1.25}\smash{\begin{tabular}[t]{c}0.3\end{tabular}}}}%
        \put(0.54125218,0.63895652){\color[rgb]{0.30196078,0.30196078,0.30196078}\makebox(0,0)[t]{\lineheight{1.25}\smash{\begin{tabular}[t]{c}0.5\end{tabular}}}}%
        \put(0.59987825,0.63895652){\color[rgb]{0.30196078,0.30196078,0.30196078}\makebox(0,0)[t]{\lineheight{1.25}\smash{\begin{tabular}[t]{c}0.7\end{tabular}}}}%
        \put(0.65850437,0.63895652){\color[rgb]{0.30196078,0.30196078,0.30196078}\makebox(0,0)[t]{\lineheight{1.25}\smash{\begin{tabular}[t]{c}0.9\end{tabular}}}}%
        \put(0.67168696,0.33793043){\makebox(0,0)[rt]{\lineheight{1.25}\smash{\begin{tabular}[t]{r}$n = 100$\end{tabular}}}}%
        \put(0,0){\includegraphics[width=\unitlength,page=4]{plot_normdata_covering_svg-tex.pdf}}%
        \put(0.70394781,0.63895652){\color[rgb]{0.30196078,0.30196078,0.30196078}\makebox(0,0)[t]{\lineheight{1.25}\smash{\begin{tabular}[t]{c}0.0\end{tabular}}}}%
        \put(0.79187824,0.63895652){\color[rgb]{0.30196078,0.30196078,0.30196078}\makebox(0,0)[t]{\lineheight{1.25}\smash{\begin{tabular}[t]{c}0.3\end{tabular}}}}%
        \put(0.85050436,0.63895652){\color[rgb]{0.30196078,0.30196078,0.30196078}\makebox(0,0)[t]{\lineheight{1.25}\smash{\begin{tabular}[t]{c}0.5\end{tabular}}}}%
        \put(0.90913044,0.63895652){\color[rgb]{0.30196078,0.30196078,0.30196078}\makebox(0,0)[t]{\lineheight{1.25}\smash{\begin{tabular}[t]{c}0.7\end{tabular}}}}%
        \put(0.96775656,0.63895652){\color[rgb]{0.30196078,0.30196078,0.30196078}\makebox(0,0)[t]{\lineheight{1.25}\smash{\begin{tabular}[t]{c}0.9\end{tabular}}}}%
        \put(0.98093909,0.33793043){\makebox(0,0)[rt]{\lineheight{1.25}\smash{\begin{tabular}[t]{r}$n = 500$\end{tabular}}}}%
        \put(0,0){\includegraphics[width=\unitlength,page=5]{plot_normdata_covering_svg-tex.pdf}}%
        \put(0.06368696,0.09130435){\color[rgb]{0.30196078,0.30196078,0.30196078}\makebox(0,0)[rt]{\lineheight{1.25}\smash{\begin{tabular}[t]{r}0.0\end{tabular}}}}%
        \put(0.06368696,0.17373912){\color[rgb]{0.30196078,0.30196078,0.30196078}\makebox(0,0)[rt]{\lineheight{1.25}\smash{\begin{tabular}[t]{r}0.1\end{tabular}}}}%
        \put(0.06368696,0.25615651){\color[rgb]{0.30196078,0.30196078,0.30196078}\makebox(0,0)[rt]{\lineheight{1.25}\smash{\begin{tabular}[t]{r}0.2\end{tabular}}}}%
        \put(0,0){\includegraphics[width=\unitlength,page=6]{plot_normdata_covering_svg-tex.pdf}}%
        \put(0.03271304,0.1792){\rotatebox{90}{\makebox(0,0)[t]{\lineheight{1.25}\smash{\begin{tabular}[t]{c}$0.95 -$Covering\end{tabular}}}}}%
        \put(0.36245218,0.0227478){\makebox(0,0)[rt]{\lineheight{1.25}\smash{\begin{tabular}[t]{r}$n = 1000$\end{tabular}}}}%
        \put(0,0){\includegraphics[width=\unitlength,page=7]{plot_normdata_covering_svg-tex.pdf}}%
        \put(0.37198261,0.65833044){\makebox(0,0)[t]{\lineheight{1.25}\smash{\begin{tabular}[t]{c}$\rho$\end{tabular}}}}%
        \put(0.83584345,0.65833044){\makebox(0,0)[t]{\lineheight{1.25}\smash{\begin{tabular}[t]{c}$\rho$\end{tabular}}}}%
        \put(0.67168696,0.0227478){\makebox(0,0)[rt]{\lineheight{1.25}\smash{\begin{tabular}[t]{r}$n = 5000$\end{tabular}}}}%
        \put(0,0){\includegraphics[width=\unitlength,page=8]{plot_normdata_covering_svg-tex.pdf}}%
        \put(0.76441741,0.25226087){\makebox(0,0)[lt]{\lineheight{1.25}\smash{\begin{tabular}[t]{l}Choice of $m$\end{tabular}}}}%
        \put(0,0){\includegraphics[width=\unitlength,page=9]{plot_normdata_covering_svg-tex.pdf}}%
        \put(0.80399998,0.21994783){\makebox(0,0)[lt]{\lineheight{1.25}\smash{\begin{tabular}[t]{l}$m = n^{1/2}$\end{tabular}}}}%
        \put(0.80399998,0.18989566){\makebox(0,0)[lt]{\lineheight{1.25}\smash{\begin{tabular}[t]{l}$m = n^{1/4}$\end{tabular}}}}%
        \put(0.80399998,0.15984348){\makebox(0,0)[lt]{\lineheight{1.25}\smash{\begin{tabular}[t]{l}$m = n^{3/4}$\end{tabular}}}}%
        \put(0.80399998,0.12979131){\makebox(0,0)[lt]{\lineheight{1.25}\smash{\begin{tabular}[t]{l}Bickel \& Sakov\end{tabular}}}}%
        \put(0.80399998,0.09973914){\makebox(0,0)[lt]{\lineheight{1.25}\smash{\begin{tabular}[t]{l}Cluster\end{tabular}}}}%
    \end{picture}%
    \endgroup%
    % svg Code End
    % \includesvg[]{plot_normdata_covering}
    \caption{\it Difference of the empirical covering rate to the nominal $95\%$ level of the bootstrap confidence interval \eqref{det1} for different methods of choosing the sample size $m = m(n)$. The data are generated from  a two-dimensional centred normal distribution with covariance matrix \eqref{det2} and different correlations $\rho$.}
    \label{fig:vergleich_m_covering}
\end{figure}

\begin{figure}[t]
    \fontsize{8}{10}\selectfont
    % svg Code Start
    %% Creator: Inkscape 1.2.2 (b0a8486541, 2022-12-01), www.inkscape.org
    %% PDF/EPS/PS + LaTeX output extension by Johan Engelen, 2010
    %% Accompanies image file '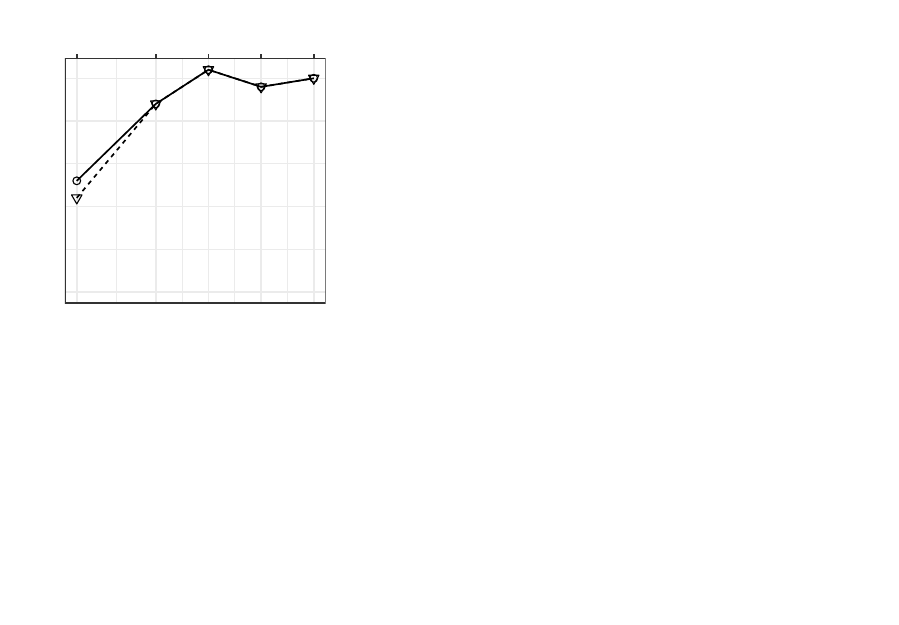' (pdf, eps, ps)
    %%
    %% To include the image in your LaTeX document, write
    %%   \input{<filename>.pdf_tex}
    %%  instead of
    %%   \includegraphics{<filename>.pdf}
    %% To scale the image, write
    %%   \def\svgwidth{<desired width>}
    %%   \input{<filename>.pdf_tex}
    %%  instead of
    %%   \includegraphics[width=<desired width>]{<filename>.pdf}
    %%
    %% Images with a different path to the parent latex file can
    %% be accessed with the `import' package (which may need to be
    %% installed) using
    %%   \usepackage{import}
    %% in the preamble, and then including the image with
    %%   \import{<path to file>}{<filename>.pdf_tex}
    %% Alternatively, one can specify
    %%   \graphicspath{{<path to file>/}}
    %% 
    %% For more information, please see info/svg-inkscape on CTAN:
    %%   http://tug.ctan.org/tex-archive/info/svg-inkscape
    %%
    \begingroup%
    \makeatletter%
    \providecommand\color[2][]{%
        \errmessage{(Inkscape) Color is used for the text in Inkscape, but the package 'color.sty' is not loaded}%
        \renewcommand\color[2][]{}%
    }%
    \providecommand\transparent[1]{%
        \errmessage{(Inkscape) Transparency is used (non-zero) for the text in Inkscape, but the package 'transparent.sty' is not loaded}%
        \renewcommand\transparent[1]{}%
    }%
    \providecommand\rotatebox[2]{#2}%
    \newcommand*\fsize{\dimexpr\f@size pt\relax}%
    \newcommand*\lineheight[1]{\fontsize{\fsize}{#1\fsize}\selectfont}%
    \ifx\svgwidth\undefined%
        \setlength{\unitlength}{431.25bp}%
        \ifx\svgscale\undefined%
            \relax%
        \else%
            \setlength{\unitlength}{\unitlength * \real{\svgscale}}%
        \fi%
    \else%
        \setlength{\unitlength}{\svgwidth}%
    \fi%
    \global\let\svgwidth\undefined%
    \global\let\svgscale\undefined%
    \makeatother%
    \begin{picture}(1,0.69565217)%
        \lineheight{1}%
        \setlength\tabcolsep{0pt}%
        \put(0,0){\includegraphics[width=\unitlength,page=1]{poiplot_svg-tex.pdf}}%
        \put(0.08544348,0.63895652){\color[rgb]{0.30196078,0.30196078,0.30196078}\makebox(0,0)[t]{\lineheight{1.25}\smash{\begin{tabular}[t]{c}0.0\end{tabular}}}}%
        \put(0.1733913,0.63895652){\color[rgb]{0.30196078,0.30196078,0.30196078}\makebox(0,0)[t]{\lineheight{1.25}\smash{\begin{tabular}[t]{c}0.3\end{tabular}}}}%
        \put(0.23199999,0.63895652){\color[rgb]{0.30196078,0.30196078,0.30196078}\makebox(0,0)[t]{\lineheight{1.25}\smash{\begin{tabular}[t]{c}0.5\end{tabular}}}}%
        \put(0.29062609,0.63895652){\color[rgb]{0.30196078,0.30196078,0.30196078}\makebox(0,0)[t]{\lineheight{1.25}\smash{\begin{tabular}[t]{c}0.7\end{tabular}}}}%
        \put(0.34925219,0.63895652){\color[rgb]{0.30196078,0.30196078,0.30196078}\makebox(0,0)[t]{\lineheight{1.25}\smash{\begin{tabular}[t]{c}0.9\end{tabular}}}}%
        \put(0.06368696,0.36528694){\color[rgb]{0.30196078,0.30196078,0.30196078}\makebox(0,0)[rt]{\lineheight{1.25}\smash{\begin{tabular}[t]{r}0.0\end{tabular}}}}%
        \put(0.06368696,0.46038261){\color[rgb]{0.30196078,0.30196078,0.30196078}\makebox(0,0)[rt]{\lineheight{1.25}\smash{\begin{tabular}[t]{r}0.1\end{tabular}}}}%
        \put(0.06368696,0.55549566){\color[rgb]{0.30196078,0.30196078,0.30196078}\makebox(0,0)[rt]{\lineheight{1.25}\smash{\begin{tabular}[t]{r}0.2\end{tabular}}}}%
        \put(0,0){\includegraphics[width=\unitlength,page=2]{poiplot_svg-tex.pdf}}%
        \put(0.03271304,0.49438261){\rotatebox{90}{\makebox(0,0)[t]{\lineheight{1.25}\smash{\begin{tabular}[t]{c}rRMSE\end{tabular}}}}}%
        \put(0.36245218,0.33793043){\makebox(0,0)[rt]{\lineheight{1.25}\smash{\begin{tabular}[t]{r}$n =50$\end{tabular}}}}%
        \put(0,0){\includegraphics[width=\unitlength,page=3]{poiplot_svg-tex.pdf}}%
        \put(0.39469565,0.63895652){\color[rgb]{0.30196078,0.30196078,0.30196078}\makebox(0,0)[t]{\lineheight{1.25}\smash{\begin{tabular}[t]{c}0.0\end{tabular}}}}%
        \put(0.4826261,0.63895652){\color[rgb]{0.30196078,0.30196078,0.30196078}\makebox(0,0)[t]{\lineheight{1.25}\smash{\begin{tabular}[t]{c}0.3\end{tabular}}}}%
        \put(0.54125218,0.63895652){\color[rgb]{0.30196078,0.30196078,0.30196078}\makebox(0,0)[t]{\lineheight{1.25}\smash{\begin{tabular}[t]{c}0.5\end{tabular}}}}%
        \put(0.59987825,0.63895652){\color[rgb]{0.30196078,0.30196078,0.30196078}\makebox(0,0)[t]{\lineheight{1.25}\smash{\begin{tabular}[t]{c}0.7\end{tabular}}}}%
        \put(0.65850437,0.63895652){\color[rgb]{0.30196078,0.30196078,0.30196078}\makebox(0,0)[t]{\lineheight{1.25}\smash{\begin{tabular}[t]{c}0.9\end{tabular}}}}%
        \put(0.67168696,0.33793043){\makebox(0,0)[rt]{\lineheight{1.25}\smash{\begin{tabular}[t]{r}$n =100$\end{tabular}}}}%
        \put(0,0){\includegraphics[width=\unitlength,page=4]{poiplot_svg-tex.pdf}}%
        \put(0.70394781,0.63895652){\color[rgb]{0.30196078,0.30196078,0.30196078}\makebox(0,0)[t]{\lineheight{1.25}\smash{\begin{tabular}[t]{c}0.0\end{tabular}}}}%
        \put(0.79187824,0.63895652){\color[rgb]{0.30196078,0.30196078,0.30196078}\makebox(0,0)[t]{\lineheight{1.25}\smash{\begin{tabular}[t]{c}0.3\end{tabular}}}}%
        \put(0.85050436,0.63895652){\color[rgb]{0.30196078,0.30196078,0.30196078}\makebox(0,0)[t]{\lineheight{1.25}\smash{\begin{tabular}[t]{c}0.5\end{tabular}}}}%
        \put(0.90913044,0.63895652){\color[rgb]{0.30196078,0.30196078,0.30196078}\makebox(0,0)[t]{\lineheight{1.25}\smash{\begin{tabular}[t]{c}0.7\end{tabular}}}}%
        \put(0.96775656,0.63895652){\color[rgb]{0.30196078,0.30196078,0.30196078}\makebox(0,0)[t]{\lineheight{1.25}\smash{\begin{tabular}[t]{c}0.9\end{tabular}}}}%
        \put(0.98093909,0.33793043){\makebox(0,0)[rt]{\lineheight{1.25}\smash{\begin{tabular}[t]{r}$n =500$\end{tabular}}}}%
        \put(0,0){\includegraphics[width=\unitlength,page=5]{poiplot_svg-tex.pdf}}%
        \put(0.06368696,0.05010434){\color[rgb]{0.30196078,0.30196078,0.30196078}\makebox(0,0)[rt]{\lineheight{1.25}\smash{\begin{tabular}[t]{r}0.0\end{tabular}}}}%
        \put(0.06368696,0.14519998){\color[rgb]{0.30196078,0.30196078,0.30196078}\makebox(0,0)[rt]{\lineheight{1.25}\smash{\begin{tabular}[t]{r}0.1\end{tabular}}}}%
        \put(0.06368696,0.24029568){\color[rgb]{0.30196078,0.30196078,0.30196078}\makebox(0,0)[rt]{\lineheight{1.25}\smash{\begin{tabular}[t]{r}0.2\end{tabular}}}}%
        \put(0,0){\includegraphics[width=\unitlength,page=6]{poiplot_svg-tex.pdf}}%
        \put(0.03271304,0.1792){\rotatebox{90}{\makebox(0,0)[t]{\lineheight{1.25}\smash{\begin{tabular}[t]{c}rRMSE\end{tabular}}}}}%
        \put(0.36245218,0.0227478){\makebox(0,0)[rt]{\lineheight{1.25}\smash{\begin{tabular}[t]{r}$n =1000$\end{tabular}}}}%
        \put(0,0){\includegraphics[width=\unitlength,page=7]{poiplot_svg-tex.pdf}}%
        \put(0.37198261,0.65833044){\makebox(0,0)[t]{\lineheight{1.25}\smash{\begin{tabular}[t]{c}$\rho$\end{tabular}}}}%
        \put(0.83584345,0.65833044){\makebox(0,0)[t]{\lineheight{1.25}\smash{\begin{tabular}[t]{c}$\rho$\end{tabular}}}}%
        \put(0.67168696,0.0227478){\makebox(0,0)[rt]{\lineheight{1.25}\smash{\begin{tabular}[t]{r}$n =5000$\end{tabular}}}}%
        \put(0,0){\includegraphics[width=\unitlength,page=8]{poiplot_svg-tex.pdf}}%
        \put(0.76441741,0.20718262){\makebox(0,0)[lt]{\lineheight{1.25}\smash{\begin{tabular}[t]{l}Choice of $m$\end{tabular}}}}%
        \put(0,0){\includegraphics[width=\unitlength,page=9]{poiplot_svg-tex.pdf}}%
        \put(0.80399998,0.17486954){\makebox(0,0)[lt]{\lineheight{1.25}\smash{\begin{tabular}[t]{l}Bickel \& Sakov\end{tabular}}}}%
        \put(0.80399998,0.14481737){\makebox(0,0)[lt]{\lineheight{1.25}\smash{\begin{tabular}[t]{l}Cluster\end{tabular}}}}%
    \end{picture}%
    \endgroup%
    % svg Code End
    % \includesvg[]{poiplot}
    \caption{\it Relative empirical root mean square error (rRMSE, \eqref{eq:rel_rmse}) of the bootstrap variance estimate \eqref{det0} for the cluster method as well as the Bickel-Sakov rule of choosing the sample size $m = m(n)$. The data follow the Poisson model \eqref{det4}.}
    \label{fig:bickel_cluster_poisson}
\end{figure}

\begin{figure}[t]
    \fontsize{8}{10}\selectfont
    % svg Code Start
    %% Creator: Inkscape 1.2.2 (b0a8486541, 2022-12-01), www.inkscape.org
    %% PDF/EPS/PS + LaTeX output extension by Johan Engelen, 2010
    %% Accompanies image file '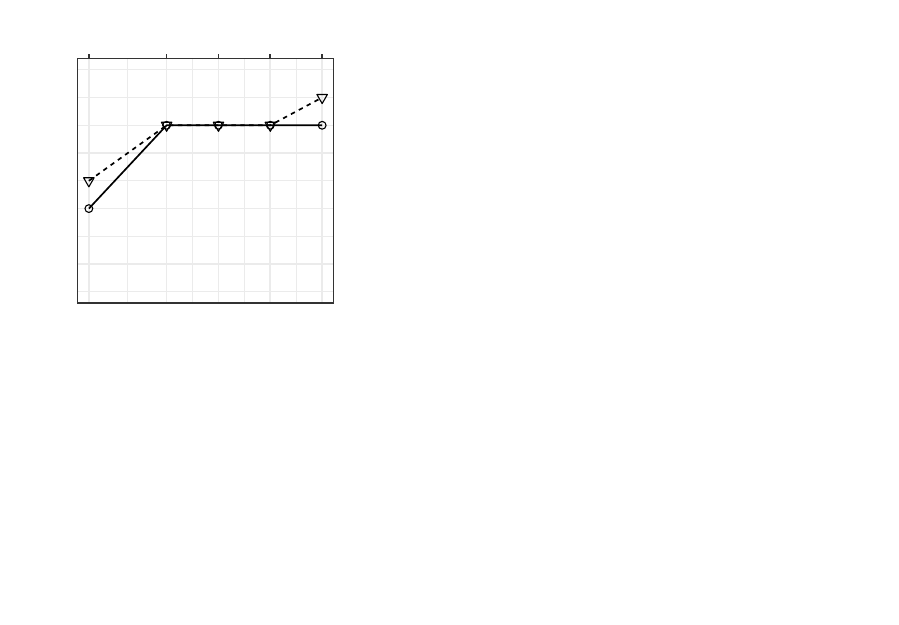' (pdf, eps, ps)
    %%
    %% To include the image in your LaTeX document, write
    %%   \input{<filename>.pdf_tex}
    %%  instead of
    %%   \includegraphics{<filename>.pdf}
    %% To scale the image, write
    %%   \def\svgwidth{<desired width>}
    %%   \input{<filename>.pdf_tex}
    %%  instead of
    %%   \includegraphics[width=<desired width>]{<filename>.pdf}
    %%
    %% Images with a different path to the parent latex file can
    %% be accessed with the `import' package (which may need to be
    %% installed) using
    %%   \usepackage{import}
    %% in the preamble, and then including the image with
    %%   \import{<path to file>}{<filename>.pdf_tex}
    %% Alternatively, one can specify
    %%   \graphicspath{{<path to file>/}}
    %% 
    %% For more information, please see info/svg-inkscape on CTAN:
    %%   http://tug.ctan.org/tex-archive/info/svg-inkscape
    %%
    \begingroup%
    \makeatletter%
    \providecommand\color[2][]{%
        \errmessage{(Inkscape) Color is used for the text in Inkscape, but the package 'color.sty' is not loaded}%
        \renewcommand\color[2][]{}%
    }%
    \providecommand\transparent[1]{%
        \errmessage{(Inkscape) Transparency is used (non-zero) for the text in Inkscape, but the package 'transparent.sty' is not loaded}%
        \renewcommand\transparent[1]{}%
    }%
    \providecommand\rotatebox[2]{#2}%
    \newcommand*\fsize{\dimexpr\f@size pt\relax}%
    \newcommand*\lineheight[1]{\fontsize{\fsize}{#1\fsize}\selectfont}%
    \ifx\svgwidth\undefined%
        \setlength{\unitlength}{431.25bp}%
        \ifx\svgscale\undefined%
            \relax%
        \else%
            \setlength{\unitlength}{\unitlength * \real{\svgscale}}%
        \fi%
    \else%
        \setlength{\unitlength}{\svgwidth}%
    \fi%
    \global\let\svgwidth\undefined%
    \global\let\svgscale\undefined%
    \makeatother%
    \begin{picture}(1,0.69565217)%
        \lineheight{1}%
        \setlength\tabcolsep{0pt}%
        \put(0,0){\includegraphics[width=\unitlength,page=1]{poiplot_covering_svg-tex.pdf}}%
        \put(0.09892174,0.63895652){\color[rgb]{0.30196078,0.30196078,0.30196078}\makebox(0,0)[t]{\lineheight{1.25}\smash{\begin{tabular}[t]{c}0.0\end{tabular}}}}%
        \put(0.18546087,0.63895652){\color[rgb]{0.30196078,0.30196078,0.30196078}\makebox(0,0)[t]{\lineheight{1.25}\smash{\begin{tabular}[t]{c}0.3\end{tabular}}}}%
        \put(0.24316523,0.63895652){\color[rgb]{0.30196078,0.30196078,0.30196078}\makebox(0,0)[t]{\lineheight{1.25}\smash{\begin{tabular}[t]{c}0.5\end{tabular}}}}%
        \put(0.30086957,0.63895652){\color[rgb]{0.30196078,0.30196078,0.30196078}\makebox(0,0)[t]{\lineheight{1.25}\smash{\begin{tabular}[t]{c}0.7\end{tabular}}}}%
        \put(0.3585739,0.63895652){\color[rgb]{0.30196078,0.30196078,0.30196078}\makebox(0,0)[t]{\lineheight{1.25}\smash{\begin{tabular}[t]{c}0.9\end{tabular}}}}%
        \put(0.07735652,0.3961913){\color[rgb]{0.30196078,0.30196078,0.30196078}\makebox(0,0)[rt]{\lineheight{1.25}\smash{\begin{tabular}[t]{r}-0.02\end{tabular}}}}%
        \put(0.07735652,0.45801739){\color[rgb]{0.30196078,0.30196078,0.30196078}\makebox(0,0)[rt]{\lineheight{1.25}\smash{\begin{tabular}[t]{r}0.00\end{tabular}}}}%
        \put(0.07735652,0.51982609){\color[rgb]{0.30196078,0.30196078,0.30196078}\makebox(0,0)[rt]{\lineheight{1.25}\smash{\begin{tabular}[t]{r}0.02\end{tabular}}}}%
        \put(0.07735652,0.58165217){\color[rgb]{0.30196078,0.30196078,0.30196078}\makebox(0,0)[rt]{\lineheight{1.25}\smash{\begin{tabular}[t]{r}0.04\end{tabular}}}}%
        \put(0,0){\includegraphics[width=\unitlength,page=2]{poiplot_covering_svg-tex.pdf}}%
        \put(0.03271304,0.49438261){\rotatebox{90}{\makebox(0,0)[t]{\lineheight{1.25}\smash{\begin{tabular}[t]{c}$0.95 -$Covering\end{tabular}}}}}%
        \put(0.37156521,0.33793043){\makebox(0,0)[rt]{\lineheight{1.25}\smash{\begin{tabular}[t]{r}$n = 50$\end{tabular}}}}%
        \put(0,0){\includegraphics[width=\unitlength,page=3]{poiplot_covering_svg-tex.pdf}}%
        \put(0.40360001,0.63895652){\color[rgb]{0.30196078,0.30196078,0.30196078}\makebox(0,0)[t]{\lineheight{1.25}\smash{\begin{tabular}[t]{c}0.0\end{tabular}}}}%
        \put(0.49015652,0.63895652){\color[rgb]{0.30196078,0.30196078,0.30196078}\makebox(0,0)[t]{\lineheight{1.25}\smash{\begin{tabular}[t]{c}0.3\end{tabular}}}}%
        \put(0.54786085,0.63895652){\color[rgb]{0.30196078,0.30196078,0.30196078}\makebox(0,0)[t]{\lineheight{1.25}\smash{\begin{tabular}[t]{c}0.5\end{tabular}}}}%
        \put(0.60556524,0.63895652){\color[rgb]{0.30196078,0.30196078,0.30196078}\makebox(0,0)[t]{\lineheight{1.25}\smash{\begin{tabular}[t]{c}0.7\end{tabular}}}}%
        \put(0.66326957,0.63895652){\color[rgb]{0.30196078,0.30196078,0.30196078}\makebox(0,0)[t]{\lineheight{1.25}\smash{\begin{tabular}[t]{c}0.9\end{tabular}}}}%
        \put(0.67624347,0.33793043){\makebox(0,0)[rt]{\lineheight{1.25}\smash{\begin{tabular}[t]{r}$n = 100$\end{tabular}}}}%
        \put(0,0){\includegraphics[width=\unitlength,page=4]{poiplot_covering_svg-tex.pdf}}%
        \put(0.70829563,0.63895652){\color[rgb]{0.30196078,0.30196078,0.30196078}\makebox(0,0)[t]{\lineheight{1.25}\smash{\begin{tabular}[t]{c}0.0\end{tabular}}}}%
        \put(0.79485219,0.63895652){\color[rgb]{0.30196078,0.30196078,0.30196078}\makebox(0,0)[t]{\lineheight{1.25}\smash{\begin{tabular}[t]{c}0.3\end{tabular}}}}%
        \put(0.85255652,0.63895652){\color[rgb]{0.30196078,0.30196078,0.30196078}\makebox(0,0)[t]{\lineheight{1.25}\smash{\begin{tabular}[t]{c}0.5\end{tabular}}}}%
        \put(0.91026091,0.63895652){\color[rgb]{0.30196078,0.30196078,0.30196078}\makebox(0,0)[t]{\lineheight{1.25}\smash{\begin{tabular}[t]{c}0.7\end{tabular}}}}%
        \put(0.96796525,0.63895652){\color[rgb]{0.30196078,0.30196078,0.30196078}\makebox(0,0)[t]{\lineheight{1.25}\smash{\begin{tabular}[t]{c}0.9\end{tabular}}}}%
        \put(0.98093909,0.33793043){\makebox(0,0)[rt]{\lineheight{1.25}\smash{\begin{tabular}[t]{r}$n = 500$\end{tabular}}}}%
        \put(0,0){\includegraphics[width=\unitlength,page=5]{poiplot_covering_svg-tex.pdf}}%
        \put(0.07735652,0.08100867){\color[rgb]{0.30196078,0.30196078,0.30196078}\makebox(0,0)[rt]{\lineheight{1.25}\smash{\begin{tabular}[t]{r}-0.02\end{tabular}}}}%
        \put(0.07735652,0.14281738){\color[rgb]{0.30196078,0.30196078,0.30196078}\makebox(0,0)[rt]{\lineheight{1.25}\smash{\begin{tabular}[t]{r}0.00\end{tabular}}}}%
        \put(0.07735652,0.2046435){\color[rgb]{0.30196078,0.30196078,0.30196078}\makebox(0,0)[rt]{\lineheight{1.25}\smash{\begin{tabular}[t]{r}0.02\end{tabular}}}}%
        \put(0.07735652,0.26645219){\color[rgb]{0.30196078,0.30196078,0.30196078}\makebox(0,0)[rt]{\lineheight{1.25}\smash{\begin{tabular}[t]{r}0.04\end{tabular}}}}%
        \put(0,0){\includegraphics[width=\unitlength,page=6]{poiplot_covering_svg-tex.pdf}}%
        \put(0.03271304,0.1792){\rotatebox{90}{\makebox(0,0)[t]{\lineheight{1.25}\smash{\begin{tabular}[t]{c}$0.95 -$Covering\end{tabular}}}}}%
        \put(0.37156521,0.0227478){\makebox(0,0)[rt]{\lineheight{1.25}\smash{\begin{tabular}[t]{r}$n = 1000$\end{tabular}}}}%
        \put(0,0){\includegraphics[width=\unitlength,page=7]{poiplot_covering_svg-tex.pdf}}%
        \put(0.38109566,0.65833044){\makebox(0,0)[t]{\lineheight{1.25}\smash{\begin{tabular}[t]{c}$\rho$\end{tabular}}}}%
        \put(0.83812176,0.65833044){\makebox(0,0)[t]{\lineheight{1.25}\smash{\begin{tabular}[t]{c}$\rho$\end{tabular}}}}%
        \put(0.67624347,0.0227478){\makebox(0,0)[rt]{\lineheight{1.25}\smash{\begin{tabular}[t]{r}$n = 5000$\end{tabular}}}}%
        \put(0,0){\includegraphics[width=\unitlength,page=8]{poiplot_covering_svg-tex.pdf}}%
        \put(0.76669566,0.20718262){\makebox(0,0)[lt]{\lineheight{1.25}\smash{\begin{tabular}[t]{l}Choice of $m$\end{tabular}}}}%
        \put(0,0){\includegraphics[width=\unitlength,page=9]{poiplot_covering_svg-tex.pdf}}%
        \put(0.80627824,0.17486954){\makebox(0,0)[lt]{\lineheight{1.25}\smash{\begin{tabular}[t]{l}Bickel \& Sakov\end{tabular}}}}%
        \put(0.80627824,0.14481737){\makebox(0,0)[lt]{\lineheight{1.25}\smash{\begin{tabular}[t]{l}Cluster\end{tabular}}}}%
    \end{picture}%
    \endgroup%
    % svg Code End
    % \includesvg[]{poiplot_covering}
    \caption{\it Difference of the empirical covering rate to the nominal $95\%$ level of the bootstrap confidence interval \eqref{det1} for the cluster method as well as the Bickel-Sakov rule of choosing the sample size $m = m(n)$. The data follow the Poisson model \eqref{det4}.}
    \label{fig:bickel_cluster_poisson_covering}
\end{figure}

\begin{table}[t]
    \centering\small
    \begin{tabular}{c r c c c c c c c c c}
               &      & \multicolumn{4}{c}{$m$ out of $n$} &       & \multicolumn{4}{c}{Lin-Han-estimator}                                                              \\
        \cline{3-6}\cline{8-11}                                                                                                                                         \\
        $\rho$ & $n$  & RMSE                               & rRMSE & Covering                              & Length & \hspace{0.0cm} & RMSE & rRMSE & Covering & Length \\
        \hline
        0.0    & 50   & 0.10                               & 0.25  & 0.91                                  & 0.31   &                & 0.49 & 1.23  & 0.43     & (0.20) \\
               & 100  & 0.06                               & 0.15  & 0.93                                  & 0.23   &                & 0.41 & 1.03  & 0.51     & (0.15) \\
               & 500  & 0.03                               & 0.08  & 0.95                                  & 0.11   &                & 0.24 & 0.60  & 0.82     & (0.09) \\
               & 1000 & 0.02                               & 0.05  & 0.94                                  & 0.08   &                & 0.17 & 0.43  & 0.89     & (0.07) \\
               & 5000 & 0.01                               & 0.03  & 0.95                                  & 0.03   &                & 0.08 & 0.20  & 0.94     & 0.03   \\
        \hline
        0.3    & 50   & 0.14                               & 0.30  & 0.92                                  & 0.32   &                & 0.45 & 0.98  & 0.22     & (0.09) \\
               & 100  & 0.10                               & 0.22  & 0.92                                  & 0.24   &                & 0.40 & 0.87  & 0.39     & (0.11) \\
               & 500  & 0.04                               & 0.09  & 0.93                                  & 0.11   &                & 0.25 & 0.54  & 0.80     & (0.10) \\
               & 1000 & 0.03                               & 0.07  & 0.93                                  & 0.08   &                & 0.17 & 0.37  & 0.91     & 0.08   \\
               & 5000 & 0.02                               & 0.04  & 0.94                                  & 0.04   &                & 0.07 & 0.15  & 0.94     & 0.04   \\
        \hline
        0.5    & 50   & 0.17                               & 0.33  & 0.90                                  & 0.33   &                & 0.49 & 0.96  & 0.11     & (0.05) \\
               & 100  & 0.12                               & 0.24  & 0.92                                  & 0.25   &                & 0.45 & 0.88  & 0.28     & (0.07) \\
               & 500  & 0.05                               & 0.10  & 0.93                                  & 0.12   &                & 0.25 & 0.49  & 0.83     & (0.10) \\
               & 1000 & 0.04                               & 0.08  & 0.94                                  & 0.09   &                & 0.17 & 0.33  & 0.90     & 0.08   \\
               & 5000 & 0.02                               & 0.04  & 0.93                                  & 0.04   &                & 0.07 & 0.14  & 0.92     & 0.04   \\
        \hline
        0.7    & 50   & 0.14                               & 0.29  & 0.91                                  & 0.32   &                & 0.47 & 0.96  & 0.02     & (0.01) \\
               & 100  & 0.10                               & 0.20  & 0.93                                  & 0.24   &                & 0.44 & 0.89  & 0.14     & (0.04) \\
               & 500  & 0.04                               & 0.08  & 0.93                                  & 0.12   &                & 0.23 & 0.47  & 0.81     & (0.09) \\
               & 1000 & 0.03                               & 0.06  & 0.95                                  & 0.08   &                & 0.15 & 0.31  & 0.91     & 0.08   \\
               & 5000 & 0.02                               & 0.04  & 0.93                                  & 0.04   &                & 0.06 & 0.12  & 0.93     & 0.04   \\
        \hline
        0.9    & 50   & 0.06                               & 0.26  & 0.91                                  & 0.25   &                & 0.24 & 1.04  & 0.00     & (0.00) \\
               & 100  & 0.05                               & 0.22  & 0.94                                  & 0.18   &                & 0.24 & 1.04  & 0.00     & (0.00) \\
               & 500  & 0.02                               & 0.09  & 0.94                                  & 0.09   &                & 0.18 & 0.78  & 0.52     & (0.04) \\
               & 1000 & 0.02                               & 0.09  & 0.96                                  & 0.06   &                & 0.12 & 0.52  & 0.82     & (0.05) \\
               & 5000 & 0.01                               & 0.04  & 0.95                                  & 0.03   &                & 0.04 & 0.17  & 0.95     & 0.03   \\
        \hline
    \end{tabular}
    \caption{\it Empirical root mean square error (RMSE) and relative root mean square error (rRMSE) \eqref{eq:rel_rmse} of the bootstrap variance estimate \eqref{det0}
        (left part) and the variance estimate of \cite{lin_han:2022} (right part). The table also shows the
        empirical coverage probability (covering)  and length (length)  of the  $95\%$-bootstrap confidence interval obtained by the different methods. The data are generated from a two-dimensional centred normal distribution with covariance matrix \eqref{det2} and $m$ chosen according to the cluster rule. Numbers in parentheses indicate confidence intervals with an empirical coverage probability of less than $90\%$, which we consider to fail.}
    \label{tab:normal}
\end{table}

\begin{table}[t]
    \centering\small
    \begin{tabular}{c r c c c c c c c c c}
               &      & \multicolumn{4}{c}{$m$ out of $n$} &       & \multicolumn{4}{c}{Lin-Han-estimator}                                                                \\
        \cline{3-6}\cline{8-11}                                                                                                                                           \\
        $\rho$ & $n$  & RMSE                               & rRMSE & Covering                              & Length & \hspace{0.0cm} & RMSE & rRMSE & Covering & Length   \\
        \hline
        0.0    & 50   & 0.12                               & 0.29  & 0.91                                  & 0.30   &                & 0.53 & 1.26  & 0.43     & (0.20)   \\
               & 100  & 0.08                               & 0.19  & 0.93                                  & 0.23   &                & 0.43 & 1.02  & 0.53     & (0.16)   \\
               & 500  & 0.04                               & 0.10  & 0.92                                  & 0.11   &                & 0.24 & 0.57  & 0.81     & (0.10)   \\
               & 1000 & 0.03                               & 0.07  & 0.91                                  & 0.08   &                & 0.19 & 0.45  & 0.87     & (0.08)   \\
               & 5000 & 0.02                               & 0.05  & 0.84                                  & (0.04) &                & 0.08 & 0.19  & 0.81     & (0.04)   \\
        \hline
        0.3    & 50   & 0.18                               & 0.36  & 0.92                                  & 0.32   &                & 0.49 & 0.98  & 0.24     & (0.11)   \\
               & 100  & 0.13                               & 0.26  & 0.92                                  & 0.24   &                & 0.44 & 0.88  & 0.43     & (0.13)   \\
               & 500  & 0.06                               & 0.12  & 0.92                                  & 0.12   &                & 0.26 & 0.52  & 0.82     & (0.10)   \\
               & 1000 & 0.05                               & 0.10  & 0.93                                  & 0.08   &                & 0.18 & 0.36  & 0.89     & (0.08)   \\
               & 5000 & 0.02                               & 0.04  & 0.93                                  & 0.04   &                & 0.08 & 0.16  & 0.93     & 0.04     \\
        \hline
        0.5    & 50   & 0.23                               & 0.40  & 0.87                                  & (0.33) &                & 0.55 & 0.95  & 0.15     & (0.06)   \\
               & 100  & 0.17                               & 0.29  & 0.89                                  & (0.25) &                & 0.50 & 0.86  & 0.35     & (0.10)   \\
               & 500  & 0.08                               & 0.14  & 0.92                                  & 0.12   &                & 0.25 & 0.43  & 0.84     & (0.11)   \\
               & 1000 & 0.06                               & 0.10  & 0.94                                  & 0.09   &                & 0.17 & 0.29  & 0.91     & 0.09     \\
               & 5000 & 0.04                               & 0.07  & 0.95                                  & 0.04   &                & 0.07 & 0.12  & 0.94     & 0.04     \\
        \hline
        0.7    & 50   & 0.24                               & 0.41  & 0.88                                  & (0.33) &                & 0.58 & 0.98  & 0.05     & (0.02)   \\
               & 100  & 0.18                               & 0.31  & 0.91                                  & 0.25   &                & 0.54 & 0.92  & 0.18     & (0.05)   \\
               & 500  & 0.10                               & 0.17  & 0.94                                  & 0.12   &                & 0.26 & 0.44  & 0.86     & (0.11)   \\
               & 1000 & 0.08                               & 0.14  & 0.94                                  & 0.09   &                & 0.15 & 0.25  & 0.93     & 0.09     \\
               & 5000 & 0.05                               & 0.08  & 0.93                                  & 0.04   &                & 0.06 & 0.10  & 0.92     & 0.04     \\
        \hline
        0.9    & 50   & 0.11                               & 0.34  & 0.90                                  & 0.27   &                & 0.32 & 1.00  & < 0.01   & (< 0.01) \\
               & 100  & 0.08                               & 0.25  & 0.91                                  & 0.20   &                & 0.32 & 1.00  & 0.01     & (< 0.01) \\
               & 500  & 0.04                               & 0.13  & 0.94                                  & 0.09   &                & 0.21 & 0.66  & 0.70     & (0.06)   \\
               & 1000 & 0.03                               & 0.09  & 0.94                                  & 0.07   &                & 0.12 & 0.38  & 0.89     & (0.06)   \\
               & 5000 & 0.02                               & 0.06  & 0.94                                  & 0.03   &                & 0.04 & 0.13  & 0.93     & 0.03     \\
        \hline
    \end{tabular}
    \caption{\it Empirical root mean square error (RMSE) and relative root mean square error (rRMSE) \eqref{eq:rel_rmse} of the bootstrap variance estimate \eqref{det0}
        (left part) and the variance estimate of \cite{lin_han:2022} (right part). The table also shows the
        empirical coverage probability (covering)  and length (length)  of the  $95\%$-bootstrap confidence interval obtained by the different methods. The data are generated from a two-dimensional $t(3)$ distribution with scale matrix \eqref{eq:t3_correlation} and $m$ chosen according to the cluster rule. Numbers in parentheses indicate confidence intervals with an empirical coverage probability of less than $90\%$, which we consider to fail. The entry $< 0.01$ indicates a value smaller than $0.01$.}
    \label{tab:t3}
\end{table}

\begin{table}[t]
    \centering\small
    \begin{tabular}{c r c c c c c c}
        $\rho$ & $n$  & RMSE (BS) & Coverage (BS) & Length (BS) & RMSE (C) & Coverage  (C) & Length (C) \\
        \hline
        0.0    & 50   & 0.07      & 0.92          & 0.35        & 0.03     & 0.96          & 0.38       \\
               & 100  & 0.05      & 0.93          & 0.25        & 0.02     & 0.96          & 0.27       \\
               & 500  & 0.02      & 0.94          & 0.12        & 0.01     & 0.95          & 0.12       \\
               & 1000 & 0.02      & 0.94          & 0.08        & 0.01     & 0.95          & 0.08       \\
               & 5000 & 0.01      & 0.95          & 0.04        & < 0.01   & 0.95          & 0.04       \\
        \hline
    \end{tabular}
    \caption{ \it
        Empirical root mean square error (RMSE) of the bootstrap variance estimate \eqref{det0}  and empirical coverage probabilities  of the  $ 95\%$-bootstrap confidence interval \eqref{det1}. The data are generated according to the model \eqref{det4} with $\rho=0$. The columns  denoted by (BS)  and (C) correspond to the $m$ out of $n$ bootstrap and Chatterjee's variance estimator \citep[see the  definition of  $\hat{\tau}_n^2$ after Theorem 2.2 in][]{chatterjee:2021}, respectively. The entry $< 0.01$ indicates a value smaller than $0.01$.}
    \label{tab:comparison-chatterjee-bs}
\end{table}

To investigate the first question, we compare five different rules of choosing $m$. The first three are independent of the observed data and are given by $m = \lfloor n^\gamma\rfloor$, where $\gamma \in \{1/4, 1/2, 3/4\}$. The fourth rule is an adaptive rule introduced by \cite{BickelSakov2008} which consists of the following steps:
\begin{enumerate}
    \item[(i)] For each $j = 0, 1, 2, \ldots$ define $m_j := \lceil q^j n\rceil$, where $q \in (0,1)$ is some fixed parameter, and let $L_j^*$ denote the distribution of $T_{m_j,n}^*$ conditional on the observed sample $(X_1, Y_1), \ldots, (X_n, Y_n)$.
    \item[(ii)] For some metric $\rho$ consistent with weak convergence let $J$ be the smallest $j$ which minimises the distance $\rho\left(L_j^*, L_{j+1}^*\right)$.
    \item[(iii)] Use $L_J^*$ as the bootstrap approximation.
\end{enumerate}
According to \cite{BickelSakov2008}, this rule is motivated by the observation that $L_j^*$ changes only slightly whenever $j$ is in the right range of values. Under some theoretical assumptions, this rule can be proven to be asymptotically optimal.
However, at least in finite samples, it does not always perform best. This sub-optimal performance can occur if there are some distributions among the entire collection $\{ L_j^* \colon  j \geq 1 \} $ which are close to each other but far away from the limiting distribution, in our case  a  $\mathcal{N}\left(0, \sigma^2\right)$ distribution. In other words
we observe some finite sample problems
in the rule of \cite{BickelSakov2008} if the distributions $\{ L_j^* \colon  j \geq 1 \} $ form `clusters' far away from the limiting distribution.
To remedy this behaviour, we implement yet another data-driven rule, which we use as the fifth and final rule in the  simulation study.

Starting with a collection $m_1, \ldots, m_K$ of admissible rules (i.e.\@ each $m_k$ satisfies the assumptions of Theorem \ref{thm:mon_bs_u}), we proceed as follows:
\begin{enumerate}
    \item[(i)] For each $j = 1, \ldots, K$ let $L_j^*$ denote the distribution of $T_{m_j,n}^*$ conditional on the observed sample $(X_1, Y_1), \ldots, (X_n, Y_n)$.
    \item[(ii)] For some metric $\rho$ consistent with weak convergence let $J \in \argmin_{1 \leq j \leq K} \sum_{k=1}^K\rho\left(L_j^*, L_{k}^*\right)$.
    \item[(iii)]  Use $L_J^*$  bootstrap approximation.
\end{enumerate}
In contrast to the Bickel-Sakov-rule, we know that each $L_j^*$ converges to the true limiting distribution, because all rules are assumed to be admissible. Thus, heuristically, most of the  distributions $\{ L_j^* \colon j=1, \ldots , K \} $
should be close  to the limiting distribution with respect to the distance in $\rho$. If some of the rules form clusters far away from $\mathcal{N}\left(0, \sigma^2\right)$ as described above, they should accordingly also be far away from most other distributions among the $\{ L_j^* \colon j=1, \ldots , K \}  $, and thus will not be chosen by our rule. On the other hand, the law $L_J^*$ that does get chosen must be close to most other distributions $\{L_j^* \colon j=1, \ldots , K \} $, making it likely that it is also close to the limiting  $\mathcal{N}\left(0, \sigma^2\right)$ distribution. We will refer to this method as the cluster rule.

Throughout this section, we choose as  metric $\rho$ the Kolmogorov distance, and approximate the distribution of $L_j^*$ by Monte Carlo with $B = 2000$ bootstrap repetitions. When implementing the cluster method, we use $m = \lfloor n^\gamma\rfloor, \gamma = 0.4, 0.45, \ldots, 0.9,$ as our set of admissible rules $\{m_1, \ldots, m_K\}$. The parameter $q$ in the Bickel-Sakov rule is chosen as $q = 1/2$.

We compare the five different rules by how useful they are for approximating the unknown limiting variance. More precisely, we simulate data of different sample sizes $n \in \{50, 100, 500, 1000, 5000\}$, which we use  to
\begin{enumerate}[(a)]
    \item estimate the unknown limiting variance of $T_n$ using the $m$ out of $n$ bootstrap estimator $\sigma^{*2}_{m,n} $ and, based on this variance estimate,
    \item construct an asymptotic 95 \%-confidence interval of the form
          \begin{align} \label{det1}
              \big ( \xi_n -  z_{0.975} \cdot \sigma_{m,n}^*/\sqrt{n},~
              \xi_n +  z_{0.975} \cdot \sigma_{m,n}^*/\sqrt{n} \big ),
          \end{align}
          where $z_{0.975}$ is the lower $97.5 \%$ quantile of the standard normal distribution.
\end{enumerate}
We use $B = 2000$ bootstrap repetitions in each simulation run to  determine the variance estimator $\sigma_{m,n}^{* 2}$ and $M = 1000$ simulation runs are used to calculate the empirical root mean square error (RMSE) of the variance estimate and the empirical covering rates of the bootstrap confidence intervals. To make the RMSE more interpretable we normalise it with the true limiting variance, obtaining the relative RMSE (rRMSE)
\begin{equation}
    \label{eq:rel_rmse}
    \textrm{rRMSE} := \textrm{RMSE}/\sigma^2,
\end{equation}
where $\sigma^2$ is the limiting variance under the data generating model. The model-specific limiting variances as well as the Dette-Siburg-Stoimenov measures of dependence $\xi$ are given in Table \ref{tab:limiting-values}. To obtain these values, we generate $M = 50\,000$ iid copies of $\xi_n$ under each model with sample size $n = 100 \, 000$. The estimations of $\xi$ and $\sigma^2$ are then chosen as the average and $n$ times the sample variance of the $50\,000$ copies of $\xi_n$.

The rRMSE of the five different rules for choosing $m$ are compared in Figure \@ \ref{fig:vergleich_m}. The data are  generated according to $(X_k, Y_k) \overset{iid}{\sim} \mathcal{N}\left(\mu, \Sigma\right)$ with expectation $\mu = (0, 0)^T$, covariance matrix
\begin{align}
    \label{det2}
    \Sigma = \begin{pmatrix}
                 1    & \rho \\
                 \rho & 1
             \end{pmatrix}
\end{align}
and correlation $\rho \in \{0, 0.3, 0.5, 0.7, 0.9\}$. The most apparent effect is the unstableness of most of the rules at $\rho = 0.9$. At this correlation, $Y_k$ is close to being a linear function of $X_k$, in which case no asymptotics for Chatterjee's rank correlation exist. The rules $n^{1/4}$ and $n^{3/4}$ are effected by this at least  in some cases, the Bickel-Sakov-rule in all cases. The rule $m = \sqrt{n}$ does not seem to be as susceptible to this `almost degenerate' case, but its performance is not optimal for smaller values of $\rho$. On the other hand, the cluster rule described above is stable even at $\rho = 0.9$ while also attaining the lowest or second lowest rRMSE in all cases.

A comparison of the different rules in the context of confience interval construction is given in Figure  \ref{fig:vergleich_m_covering}. Here we plot the difference of the empirical confience interval to the nominal $95\%$ level. The performance of the different rules compared in terms of the rRMSE basically translates to the performance in terms of the covering rate. However,  we point out that the confidence intervals constructed by the cluster method are
the only ones which stay within an acceptable range of the nominal $95\%$ level for all values of $n$ and $\rho$.
The lengths of the confidence intervals obtained by  the five rules only  show little variation, and therefore we do not report them here.

To further compare the Bickel-Sakov rule with the cluster method of choosing $m$, we consider a second data example, namely
$$
    X_k \overset{iid}{\sim} \mathrm{Poisson}(2),
$$
and, for an independent copy $(Z_k)_{k \in \mathbb{N}}$ of $(X_k)_{k \in \mathbb{N}}$,
\begin{align}
    \label{det4}
    Y_k := \tau X_k + (1 - \tau) Z_k,
\end{align}
for different parameters $0 \leq \tau < 1$. Note that in this model the correlation between $X$ and $Y$ is given by
\begin{equation}
    \label{eq:poimodel_korrelation}
    \rho := {\rm Corr} (X,Y) = \frac{\tau}{\sqrt{\tau  ^2 + (1-\tau )^2}}
\end{equation}
and we choose the values of $\tau$ such that we obtain $\rho \in \{0, 0.3, 0.5, 0.7, 0.9\}$ to make both  models comparable.

In the discrete case we have to make a minor adjustment, since Chatterjee's rank correlation is not defined if $Y_1 = \ldots = Y_n$. We therefore discard any bootstrap sample $(X_1^*, Y_1^*), \ldots, (X_m^*, Y_m^*)$ for which $Y_1^* = \ldots = Y_m^*$ and  replace it  by newly drawn `valid' bootstrap samples, thus ensuring that we end up with the desired total of $B  = 2000$ bootstrap repetitions. It should be noted, however, that this issue never came up for sample sizes larger than $100$.

The comparison of the Bickel-Sakov rule and  the cluster method is given in Figures  \ref{fig:bickel_cluster_poisson} and \ref{fig:bickel_cluster_poisson_covering}. The cluster rule outperforms the Bickel-Sakov rule in terms of the rRMSE. The jump at $\rho = 0.9$ is still present, but less extreme than in the Gaussian model; furthermore, the cluster rule manages to decrease this effect with growing $n$. The covering rates are generally closer to the nominal level of $95 \%$ when constructed according to the cluster rule but the differences are not substantial.

Summarising the findings from Figure  \ref{fig:vergleich_m} -  \ref{fig:bickel_cluster_poisson_covering} leads us to the  following conclusions. Although not always the best,   the rule $m = \sqrt{n}$  is a reasonable choice for $m$. In  the Gaussian example it is even robust in the edge case $\rho = 0.9$.
The cluster rule of choosing $m$ is not infallible, but its overall behaviour is  most convincing among the  methods  under consideration. Finally, the Bickel-Sakov rule performs poorly in the almost degenerate case $\rho = 0.9$ and it is also not optimal in some other cases.
For these reasons we use the cluster rule in all remaining examples of this section.

We continue comparing our proposal with the Lin-Han-estimator for continuous data. To ensure a correct implementation, we use the authors' \texttt{R} code, to which they provide a link in \cite{lin_han:2022}, Remark 1.6. The results of this comparison in the Gaussian model \eqref{det2} are given in Table \ref{tab:normal}, where the left  part corresponds to the $m$ out of $n$ bootstrap using the cluster rule.
We observe that the coverage probabilities of the confidence intervals \eqref{det1}
attain values close to the desired level of $95\%$, whenever $n \geq 500$. This is in stark contrast to the confidence intervals based on the Lin-Han-estimator which are displayed in the right part of  Table \ref{tab:normal} and obtained by replacing $\sigma_{m,n}^*$ in Eq.\@ \eqref{det1} by the Lin-Han-estimate of the limiting standard deviation. These confidence intervals only seem to perform well for $n \geq 1000$, rendering them useless for small to medium sample sizes. When both methods yield satisfying covering rates, the lengths of the confidence intervals are virtually identical. On the other hand, the rRMSE values based on the Lin-Han-estimator are much larger than those based on the $m$ out of $n$ bootstrap. In light of these results, both for the rRMSE and the confidence intervals, we consider the $m$ out of $n$ method a substantial improvement on the Lin-Han-estimator.

A further comparison with the Lin-Han-estimator is made for non-Gaussian continuous data. Recall that if $U$ is $\chi^2$-distributed with $\nu$ degrees of freedom and $Z$ is a centred $p$-dimensional Gaussian vector with covariance matrix $\Sigma$, then the distribution of
$Z/\sqrt{U/\nu}$ is called  a $p$-dimensional $t(\nu)$-distribution with scale matrix $\Sigma$. For our second continuous model we consider a $2$-dimensional $t(3)$-distribution with scale matrix
\begin{equation}
    \label{eq:t3_correlation}
    \Sigma = \begin{pmatrix}
        1 & \rho \\ \rho & 1
    \end{pmatrix}.
\end{equation}
The results of this comparison are given in Table \ref{tab:t3}. Again, the $m$ out of $n$ bootstrap clearly outperforms the Lin-Han-estimator in terms of rRMSE as well as covering rates.

The results of our simulations based on discrete data were already presented in Figure \@ \ref{fig:bickel_cluster_poisson} and \ref{fig:bickel_cluster_poisson_covering}. Since there are no alternative estimation methods available in this case, we do not have a comparison for the $m$ out of $n$ bootstrap. Again, the rRMSE values seem convincing for medium to large sample sizes and the empirical coverage probabilities of the confidence intervals are satisfactory; indeed, even for the sample size  $n = 50$, the empirical coverage probabilitiy never drops below 91\%.

Let us now compare Chatterjee's variance estimator and the $m$ out of $n$ bootstrap estimator in the  case of discrete data such that $X$ and $Y$ are independent, where we again use the discrete model \eqref{det4} (with  $\rho = 0$). The results are given in Table \ref{tab:comparison-chatterjee-bs}, and we observe that Chatterjee's variance estimator has a slightly lower RMSE and yields covering rates which are very close to the desired level of 95 \%. The lengths of the confidence intervals are very similar (this is to be expected as both estimators converge to the same limiting variance). Overall, we conclude that the performance of the $m$ out of $n$ bootstrap estimator is comparable to Chatterjee's variance estimator for moderate and large sample sizes, but worse for smaller sample sizes. In particular, if one is interested in testing independence of discrete data, we see no reason to not use Chatterjee's estimation method for computing the critical values. The obvious advantage of our method, however, is that it can deal with dependent data just as well, whereas Chatterjee's variance estimator is only consistent in the independent case. In this particular simulation study, its estimates for the limiting variance were too large for all values of $\rho > 0$, resulting in a covering rate of 100\% at the cost of significantly longer confidence intervals, and strictly positive limiting values of the RMSE.

We conclude this section with a brief discussion of the time complexity of the adaptive version of the
$m$ out of $n$ bootstrap.  Recall from Remark \ref{det10} that for a given $m$ this procedure has
a time complexity   of order $\mathcal{O}(B m \log m )$, where $B$ is the number of bootstrap iterations. For the cluster rule, the $m$ out of $n$ bootstrap is performed $K$ times, where $K$ denotes the fixed number of admissible rules that are considered. Let us also assume that in the Monte-Carlo simulations performed to approximate the distributions $L_1^*, \ldots, L_K^*$ we use the same number $B$ of bootstrap repetitions. Then each $L_j^*$ is a discrete distribution with its mass concentrated in at most $B$ points. Sorting these points of mass can be done in time $\mathcal{O}\left(B \log B\right)$ \citep[for instance via heapsort; cf. Chapter 6.4 in][]{cormen_et_al:introduction_algorithms}, and constructing the cumulative distribution function from this sorted array only takes an additional $\mathcal{O}(B)$ as it only involves summation and division. Thus, the overall time of determining the cumulative distribution function of $L_j^*$ is $\mathcal{O}(B \log B)$. Now for each pair $L_i^*$ and $L_j^*$ with $i \neq j$, calculating the Kolmogorov distance $\rho\left(L_i^*, L_j^*\right)$ reduces down to comparing the corresponding cumulative distribution functions at each of their points of discontinuity, of which there are at most $2B$, and so the entire process can be done in $\mathcal{O}(2B \log B + 2B) = \mathcal{O}(B \log B)$. Thus, the entire $m$ out of $n$ bootstrap using the cluster method takes $\mathcal{O}(KBm \log m + K(K-1)B \log B)$ steps. If the parameters $K$ and $B$ stay fixed, this is certainly sub-linear in $n$; we can even allow both $K$ and $B$ to tend to infinity at a certain rate while retaining sub-linear time complexity.

For the Bickel-Sakov rule, similar arguments can be made. We have to calculate the Kolmogorov distance between $\log_{1/q} n$ pairs of distributions, where $\log_b$ denotes the logarithm for a basis $b$, and so the total time complexity is given by $\mathcal{O}(\log_{1/q} n B m \log m + \log_{1/q} n B \log B)$, which is sub-linear for fixed $q$ and $B$, and stays sub-linear even if $q \to 0$ and $B \to \infty$, provided the convergences have appropriate rates.

\section{Conclusions}
\label{sec:conclusion}
As proven by \cite{lin_han:2023} in their technically impressive article, the usual bootstrap fails for Chatterjee's rank correlation. \cite{chatterjee:2021} provides an estimator that performs well, but is limited to the case of data with independent coordinates $X$ and $Y$ and thus can be used only for testing independence. On the other hand, \cite{lin_han:2022} derive an estimator that works regardless of the dependence between $X$ and $Y$ but is limited to sample generating processes with continuous distribution functions.

In this article, we have proposed an $m$ out of $n$ bootstrap to estimate the limiting distribution of Chatterjee's rank correlation. Our method is easy to apply and by Theorem \ref{thm:mon_bs_u}, it is consistent whenever the observed statistic $T_n$ converges to some non-degenerate normal distribution. This general result is of particular interest,
because so far  there do not exist any weak convergence results
for Chatterjee's rank correlation in the discrete case
if $X$ and $Y$ are  not independent. Our result is directly applicable as soon as such a convergence is established. We have also proved  consistency
of the $m$ out of $n$ bootstrap with respect to  the Wasserstein distance $d_2$, which is a stronger result than weak convergence alone.

Simulations indicate that our method performs well in both discrete and continuous settings, even at smaller sample sizes. In terms of the RMSE, our bootstrap estimator consistently outperforms the estimation method proposed by \cite{lin_han:2022}. Comparison with Chatterjee's estimator from \cite{chatterjee:2021} is less conclusive; but as Chatterjee's estimator is only consistent for independent $X$ and $Y$, this comparison is only relevant if one wants to test independence of $X$ and $Y$. For any other application our estimation method is well-suited. It is also worth pointing out that, to the best of our knowledge, there are no alternative estimation methods when the sample generating processes are discrete but $X$ and $Y$ are not independent. The time complexity of our bootstrap scheme is sub-linear in $n$ and  the benefit in performance that the $m$ out of $n$ bootstrap yields is not paid for in excessive runtime. For applications we have  developed a data adaptive rule for choosing $m$ (the cluster method) and have demonstrated that this method  performs very well in a variety of situations.

\section{Proofs}
\label{sec:proof}
\subsection{Proof of Theorem \ref{thm:mon_bs_u}}

We begin with an auxiliary result, which will be used in the proof of Theorem \ref{thm:mon_bs_u}.
\begin{lemma}
    \label{lem:glivenko_cantelli_in_prob}
    Let $F_n$ be a sequence of random distribution functions such that for any fixed but arbitrary $x \in \mathbb{R}$ it holds that $F_n(x) \to F(x)$ in probability and $F_n(x^-) \to F(x^-)$ in probability, where $f(x^-) := \lim_{t \nearrow x} f(t)$ for a function $f$. Then $D_n := \sup_{x \in \mathbb{R}} |F_n(x) - F(x)|$ converges to $0$ in probability.
\end{lemma}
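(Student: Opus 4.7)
The plan is to adapt the classical Glivenko--Cantelli argument to the ``in probability'' setting. The key observation is that both $F_n$ and $F$ are non-decreasing, so pointwise control on a finite grid, together with the total variation of $F$ being bounded, gives uniform control.

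First I would fix $\varepsilon > 0$ and construct a finite partition $-\infty = x_0 < x_1 < \cdots < x_{k-1} < x_k = +\infty$ of the real line such that $F(x_j^-) - F(x_{j-1}) \leq \varepsilon$ for every $j = 1, \ldots, k$. The standard way is to define $x_j := \inf\{x \in \mathbb{R} : F(x) \geq j\varepsilon\}$ for $j = 1, 2, \ldots$; by right-continuity of $F$ this gives $F(x_j^-) \leq j\varepsilon \leq F(x_j)$, so the differences between successive levels are bounded by $\varepsilon$, and because $F$ only takes values in $[0,1]$ the construction terminates after finitely many steps.

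Next, for any $x \in [x_{j-1}, x_j)$, monotonicity of $F_n$ and $F$ gives
$$
F_n(x_{j-1}) - F(x_j^-) \;\leq\; F_n(x) - F(x) \;\leq\; F_n(x_j^-) - F(x_{j-1}).
$$
Adding and subtracting $F(x_{j-1})$ on the left and $F(x_j^-)$ on the right, and using $F(x_j^-) - F(x_{j-1}) \leq \varepsilon$, I would obtain
$$
|F_n(x) - F(x)| \;\leq\; \varepsilon + \max_{1 \leq j \leq k-1}\bigl\{|F_n(x_j) - F(x_j)|,\;|F_n(x_j^-) - F(x_j^-)|\bigr\}.
$$
Taking the supremum over $x \in \mathbb{R}$ and noting that the right-hand side no longer depends on $x$ yields the same bound for $D_n$.

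Finally, since the partition has finitely many points, I would apply the union bound: for any $\delta > 0$,
$$
\mathbb{P}(D_n > \varepsilon + \delta) \;\leq\; \sum_{j=1}^{k-1}\Bigl[\mathbb{P}(|F_n(x_j) - F(x_j)| > \delta) + \mathbb{P}(|F_n(x_j^-) - F(x_j^-)| > \delta)\Bigr],
$$
and each summand tends to $0$ by the assumed pointwise convergence in probability. Letting first $n \to \infty$ and then $\varepsilon, \delta \to 0$ gives $D_n \xrightarrow{\mathbb{P}} 0$. There is no real obstacle here beyond organising the approximation cleanly; the subtlety, if any, lies in remembering to include the left limits $F_n(x_j^-)$ as sampling points, which is necessary precisely because $F$ may have jumps (as it will in the discrete case covered by Theorem~\ref{thm:mon_bs_u}(i)).
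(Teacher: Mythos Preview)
Your proposal is correct and follows essentially the same route as the paper's proof: both adapt the classical Glivenko--Cantelli argument by constructing a finite quantile grid (your $x_j = \inf\{x:F(x)\geq j\varepsilon\}$ is exactly the paper's $x_{m,k}=\varphi(k/m)$ with $\varepsilon=1/m$), establishing the deterministic bound $D_n \leq \varepsilon + \max_j\{|F_n(x_j)-F(x_j)|,|F_n(x_j^-)-F(x_j^-)|\}$, and then concluding via a union bound over the finitely many grid points using the assumed pointwise convergence in probability. The paper cites Billingsley's Theorem~20.6 for the deterministic inequality while you spell it out directly, but the arguments are otherwise identical.
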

\begin{proof}
    This is essentially a version of the Glivenko-Cantelli-Theorem. We follow the proof given in \cite{billingsley:prob_and_measure}, Theorem 20.6. The main difference is that we do not assume $F_n$ to be the empirical distribution function of an iid sample, replacing the almost sure pointwise convergences by pointwise convergences in probability.

    Define the quantities $\varphi(u) := \inf\{x \in \mathbb{R} ~|~u \leq F(x)\}$ and $x_{m,k} := \varphi(k/m)$ for all $m \in \mathbb{N}$ and $k = 1, \ldots, m$. Furthermore, let
    $$
        D_{m,n} := \max_{1 \leq k \leq m} \{|F_n(x_{m,k}) - F(x_{m,k})| \lor |F_n(x_{m,k}^-) - F(x_{m,k}^-)|\},
    $$
    where $a \lor b := \max\{a,b\}$. As in the original proof of Theorem 20.6 in \cite{billingsley:prob_and_measure} one can show that $D_n \leq D_{m,n} + 1/m$ for all $m,n \in \mathbb{N}$. This is a consequence of the fact that distribution functions are isotonic and bounded. Now let $\varepsilon, \delta > 0$ be arbitrary but fixed. Then it follows that
    \begin{align}
        \begin{split}
            \label{eq:billingsley_grenze}
            \mathbb{P}(D_n > \varepsilon) &\leq \mathbb{P}\left(D_{m,n} + \frac{1}{m} > \varepsilon\right) \\
            &\leq \sum_{k=1}^m \mathbb{P}\left(|F_n(x_{m,k}) - F(x_{m,k})| > \frac{1}{2}\left(\frac{\varepsilon}{m} - m^{-2}\right)\right) \\
            &\quad + \sum_{k=1}^m \mathbb{P}\left(|F_n(x_{m,k}^-) - F(x_{m,k}^-)| > \frac{1}{2}\left(\frac{\varepsilon}{m} - m^{-2}\right)\right)
        \end{split}
    \end{align}
    for all $m,n \in \mathbb{N}$. Choose $m_0 = m_0(\varepsilon) := \min\{m \in \mathbb{N} ~|~ \varepsilon/m - 1/m^2 > 0\} \leq 1 + \lceil 1/\varepsilon\rceil$ and $n_0$ as the smallest integer such that both
    $$
        \mathbb{P}\left(|F_n(x_{m_0,k}) - F(x_{m_0,k})| > \frac{1}{2}\left(\frac{\varepsilon}{m_0} - m_0^{-2}\right)\right) < \frac{\delta}{2m_0}
    $$
    and
    $$
        \mathbb{P}\left(|F_n(x_{m_0,k}^-) - F(x_{m_0,k}^-)| > \frac{1}{2}\left(\frac{\varepsilon}{m_0} - m_0^{-2}\right)\right) < \frac{\delta}{2m_0}
    $$
    are satisfied for all $1 \leq k \leq m_0$ and $n \geq n_0$. Now Eq.\@ \eqref{eq:billingsley_grenze} implies that $\mathbb{P}(D_n > \varepsilon) < \delta$ for all $n \geq n_0$, which proves our claim.
\end{proof}

\begin{proof}[Proof of Theorem \ref{thm:mon_bs_u}]
    Throughout the proof, $Q = \mathcal{L}(X,Y)$  will denote the joint distribution of $X$ and $Y$.

    We begin by assuming that condition $(ii)$ is fulfilled and show that $T_n$ may be expressed as a functional of the sample data and their marginal distribution $Q$, and that $T_{m,n}^*$ may be expressed as a functional of the bootstrap data and the empirical distribution $Q_n$ of the observed data $(X_1, Y_1), \ldots, (X_n, Y_n)$. Without loss of generality, assume that the observations $(X_1, Y_1), \ldots, (X_n, Y_n)$ are all distinct. For any probability measure $\eta$ on some $\left(\mathbb{R}^d, \mathcal{B}\left(\mathbb{R}^d\right)\right)$, where $\mathcal{B}\left(\mathbb{R}^d\right)$ denotes the Borel-$\sigma$-algebra on $\mathbb{R}^d$, let $(U_k(\eta))_{k \in \mathbb{N}}$ be some iid process with marginal distribution $\eta$, and define a random sequence $(j(i))_{i \in \mathbb{N}}$ by $j(1) := 1$ and
    $$
        j(i) := \min\{k \in \mathbb{N} ~|~ U_k(\eta) \notin \{U_{j(1)}, \ldots, U_{j(i-1)}\}\},
    $$
    for all $i > 1$ (we may use the convention $\min \emptyset = \infty$ if $\eta$ only has mass at finitely many points). Finally, for any appropriate $k \in \mathbb{N}$ (i.e.\@ any $k$ such that $j(k) < \infty$), let
    $$
        V(k, \eta) := \left(U_{j(i)}(\eta)\right)_{1 \leq i \leq k}.
    $$

    $V(k,\eta)$ is a random vector with $k$ coordinates, each of which has marginal distribution $\eta$. By construction, all of its coordinates are distinct. If $\eta$ has a continuous distribution function, then $j(i) = i$ almost surely, and so $V(k,\eta) = (U_1(\eta), \ldots, U_k(\eta))$ almost surely. In this case, for any $\eta^k$-integrable function $f : \left(\mathbb{R}^d\right)^k \to \mathbb{R}$, it holds that
    $$
        \mathbb{E}\left[f(V(k,\eta))\right] = \mathbb{E}\left[f(U_1(\eta), \ldots, U_k(\eta))\right] = \int f ~\mathrm{d}\eta^k.
    $$
    On the other hand, if $\eta$ is a uniform distribution on a set $\{x_1, \ldots, x_N\}$, then $V(k,\eta)$ can be constructed by drawing $k$ elements from $\{x_1, \ldots, x_N\}$ without replacement, and so
    $$
        \mathbb{E}\left[f(V(k,\eta))\right] = \frac{(N - k)!}{N!}\sum_{(i_1, \ldots, i_k) \in B(N,k)} f(x_{i_1}, \ldots, x_{i_k}),
    $$
    where $B(N,k) := \left\{(i_1, \ldots, i_k) \in \{1, \ldots, N\}^k ~|~ i_j \neq i_l ~\forall j \neq l\right\}$ is the set of all collections of pairwise different indices between $1$ and $N$. If $f$ is symmetric in its arguments then this further simplifies to
    $$
        \mathbb{E}\left[f(V(k,\eta))\right] = \frac{(N-k)!}{N!} \, k! \sum_{1 \leq i_1 < \ldots < i_k \leq N} f(x_{i_1}, \ldots, x_{i_k}) = {N \choose k}^{-1} \sum_{1 \leq i_1 < \ldots < i_k \leq N} f(x_{i_1}, \ldots, x_{i_k}).
    $$
    Defining the functional $L(k,\eta) := \mathbb{E}\left[\xi_k\left(V(k,\eta)\right)\right]$, we thus obtain that
    $$
        \mathbb{E}\xi_n = \int \xi_n ~\mathrm{d}Q^n = L(n,Q),
    $$
    and
    \begin{align*}
        \mathbb{E}\left[\xi_m^* ~|~ (X_1, Y_1), \ldots, (X_n, Y_n)\right] & = {n \choose m}^{-1} \sum_{1 \leq i_1 < \ldots < i_m \leq n} \xi_m\left((X_{i_1}, Y_{i_1}), \ldots, (X_{i_m}, Y_{i_m})\right) \\
                                                                          & = L(m,Q_n),
    \end{align*}
    where $Q_n$ denotes the empirical measure of the sample $(X_1, Y_1), \ldots, (X_n, Y_n)$. Therefore,
    $$
        T_n = \sqrt{n}\left(\xi_n - L(n,Q)\right),
    $$
    and
    $$
        T_{m,n}^* = \sqrt{m}\left(\xi_m^* - L(m,Q_n)\right),
    $$
    i.e.\@ $T_n = t_n((X_1, Y_1), \ldots, (X_n, Y_n), Q)$ may be expressed as a functional of the observed sample data and their marginal distribution, and $T_{m,n}^* = t_m\left((X_1^*, Y_1^*), \ldots, (X_m^*, Y_m^*), Q_n\right)$ as a functional of the bootstrap data and the empirical distribution $Q_n$.

    Finally, for these functionals $t_m$, it holds that
    $$
        t_m((X_1, Y_1), \ldots, (X_m, Y_m), Q) - t_m((X_1, Y_1), \ldots, (X_m, Y_m), Q_n) = \sqrt{m}(L(m, Q_n) - L(m, Q)).
    $$
    We wish to bound this difference in probability. Note that
    \begin{align*}
        \mathbb{E} L(m,Q_n) & = {n \choose m}^{-1} \sum_{1 \leq i_1 < \ldots < i_m \leq n} \mathbb{E}\left[\xi_m\left((X_{i_1}, Y_{i_1}), \ldots, (X_{i_m}, Y_{i_m})\right)\right] \\
                            & = \mathbb{E}\left[\xi_m\left((X_{1}, Y_{1}), \ldots, (X_m, Y_m)\right)\right] = L(m,Q),
    \end{align*}
    and
    $$
        \mathrm{Var}(L(m,Q_n)) \leq \frac{m}{n} \, \mathrm{Var}\left(\xi_m((X_{1}, Y_{1}), \ldots, (X_{m}, Y_{m}))\right)
    $$
    by Theorem 5.2 in \cite{hoeffding:1948}. This is standard theory for $U$-statistics of iid data. By Proposition 1.2 in \cite{lin_han:2022}, we obtain that
    \begin{equation}
        \label{eq:lin_han_varianz}
        \mathrm{Var}\left(\xi_m((X_{1}, Y_{1}), \ldots, (X_{m}, Y_{m}))\right) = \mathcal{O}\left(\frac{1}{m}\right).
    \end{equation}

    We thus obtain
    $$
        L(m, Q_n) = \mathbb{E}L(m, Q_n) + \mathcal{O}_\mathbb{P}\left(\sqrt{\mathrm{Var}(L(m,Q_n))}\right) = L(m,Q) + \mathcal{O}_\mathbb{P}\left(\frac{1}{\sqrt{n}}\right),
    $$
    and so
    \begin{align*}
         & t_m((X_1, Y_1), \ldots, (X_m, Y_m), Q) - t_m((X_1, Y_1), \ldots, (X_m, Y_m), Q_n)                                                 \\
         & \qquad = \sqrt{m}\,\mathcal{O}_\mathbb{P}\left(\frac{1}{\sqrt{n}}\right) = \mathcal{O}_\mathbb{P}\left(\sqrt{\frac{m}{n}}\right),
    \end{align*}
    which is $o_\mathbb{P}(1)$ by assumption. Theorem 1 in \cite{bickel_et_al:1997} now implies
    \begin{equation}
        \label{eq:bickel_goetze_vanzweet}
        \left|\mathbb{E}\left[h\left(T_{m,n}^*\right) ~|~ (X_1, Y_1), \ldots, (X_n, Y_n)\right] - \mathbb{E}[h(T_m)]\right| \xrightarrow[n \to \infty]{\mathbb{P}} 0
    \end{equation}
    for any continuous and bounded real-valued function $h$.

    Fix some $x \in \mathbb{R}$. For any $\varepsilon > 0$, define $g_{x, \varepsilon}(t)$ as $1$ for $t \leq x - \varepsilon$, $0$ for $t > x$ and linearly interpolated between $x - \varepsilon$ and $x$. Similarly, define $h_{x, \varepsilon}(t)$ as $1$ for $t \leq x$, $0$ for $t > x + \varepsilon$ and linearly interpolated between $x$ and $x + \varepsilon$. Obviously it holds that $g_{x,\varepsilon} \leq \textbf{1}_{(-\infty,x]} \leq  h_{x,\varepsilon}$, where $\textbf{1}_A$ denotes the indicator function of a set $A$, which gives
    \begin{align}
        \begin{split}
            \label{eq:indikatorfunktionen}
            &\left|\mathbb{E}\left[\textbf{1}_{(-\infty,x]}\left(T_{m,n}^*\right) ~|~ (X_1, Y_1), \ldots, (X_n, Y_n)\right] - \mathbb{E}[\textbf{1}_{(-\infty,x]}(T_m)]\right| \\
            &\quad \leq \max_{f \in \{g_{x,\varepsilon}, h_{x, \varepsilon}\}} \left|\mathbb{E}\left[f\left(T_{m,n}^*\right) ~|~ (X_1, Y_1), \ldots, (X_n, Y_n)\right] - \mathbb{E}[\textbf{1}_{(-\infty,x]}(T_m)]\right|.
        \end{split}
    \end{align}
    An application of the triangle inequality yields
    \begin{align*}
         & \left|\mathbb{E}\left[g_{x,\varepsilon}\left(T_{m,n}^*\right) ~|~ (X_1, Y_1), \ldots, (X_n, Y_n)\right] - \mathbb{E}[\textbf{1}_{(-\infty,x]}(T_m)]\right|                                                                                                                             \\
         & \quad \leq \left|\mathbb{E}\left[g_{x,\varepsilon}\left(T_{m,n}^*\right) ~|~ (X_1, Y_1), \ldots, (X_n, Y_n)\right] - \mathbb{E}[g_{x, \varepsilon}(T_m)]\right| + \left|\mathbb{E}\left[g_{x,\varepsilon}\left(T_{m}\right)\right] - \mathbb{E}[\textbf{1}_{(-\infty,x]}(T_m)]\right|.
    \end{align*}
    The first term converges to $0$ in probability by Eq.\@ \eqref{eq:bickel_goetze_vanzweet}, and the second term converges by assumption to $\left|\mathbb{E}\left[g_{x,\varepsilon}\left(\sigma Z\right)\right] - \mathbb{E}[\textbf{1}_{(-\infty,x]}(\sigma Z)]\right|$, where $Z$ denotes a standard normally distributed random variable. Because $|g_{x,\varepsilon} - \textbf{1}_{(-\infty,x]}| \leq 1$, it holds that
    $$
        \left|\mathbb{E}\left[g_{x,\varepsilon}\left(\sigma Z\right)\right] - \mathbb{E}[\textbf{1}_{(-\infty,x]}(\sigma Z)]\right| \leq \mathbb{P}\left(x-\varepsilon < \sigma Z \leq x\right) \leq \frac{\varepsilon}{\sqrt{2\pi}\sigma}.
    $$
    Similar arguments can be made for the function  $h_{x,\varepsilon}$, and so
    $$
        \mathbb{P}\Big (\max_{f \in \{g_{x,\varepsilon}, h_{x, \varepsilon}\}} \left\{\left|\mathbb{E}\left[f\left(T_{m,n}^*\right) ~|~ (X_1, Y_1), \ldots, (X_n, Y_n)\right] - \mathbb{E}[\textbf{1}_{(-\infty,x]}(T_m)]\right|\right\} > \frac{\varepsilon}{\sqrt{2\pi} \sigma}\Big ) \xrightarrow[n \to \infty]{} 0.
    $$
    $\varepsilon > 0$ was arbitrary, and so Eq.\@ \eqref{eq:indikatorfunktionen} implies that
    $$
        \left|\mathbb{E}\left[\textbf{1}_{(-\infty,x]}\left(T_{m,n}^*\right) ~|~ (X_1, Y_1), \ldots, (X_n, Y_n)\right] - \mathbb{E}[\textbf{1}_{(-\infty,x]}(T_m)]\right| \xrightarrow[n \to \infty]{\mathbb{P}} 0.
    $$
    By changing the definitions of $g_{x,\varepsilon}$ and $h_{x,\varepsilon}$ slightly, the same convergence can be shown for $\textbf{1}_{(-\infty, x)}$. Since $x \in \mathbb{R}$ was an arbitrary point, we have thus verified the conditions of Lemma \ref{lem:glivenko_cantelli_in_prob}, which proves the assertion of Theorem \ref{thm:mon_bs_u} under condition (ii).

    Now suppose that condition $(i)$ is satisfied. Define the objects
    \begin{align}
        \begin{split}
            \label{eq:def_u_v_t}
            U_{m,n} &:= {n \choose m}^{-1} \sum_{1 \leq i_1 < \ldots < i_m \leq n} \xi_m((X_{i_1}, Y_{i_1}), \ldots, (X_{i_m}, Y_{i_m})), \\
            V_{m,n} &:= n^{-m} \sum_{1 \leq i_1, \ldots, i_m \leq n} \xi_m((X_{i_1}, Y_{i_1}), \ldots, (X_{i_m}, Y_{i_m})), \\
            \tilde{T}_{m,n}^* &:= \sqrt{m}\left(\xi_m^* - \int \xi_m ~\mathrm{d}Q_n^m\right).
        \end{split}
    \end{align}
    Note that $U_{m,n} = \mathbb{E}\left[\xi_m^* ~|~ (X_1, Y_1), \ldots, (X_n, Y_n)\right]$ and $V_{m,n} = \int \xi_m ~\mathrm{d}\xi_n^m$. Thus,
    \begin{equation}
        \label{eq:tildeschlangestern_tildestern}
        \tilde{T}_{m,n}^* = T_{m,n}^* + \sqrt{m}\left(U_{m,n} - V_{m,n}\right),
    \end{equation}

    Define the sets $B(k,n) := \{(i_1, \ldots, i_m) ~|~ 1 \leq i_1, \ldots, i_m \leq n \land \#\{i_1, \ldots, i_m\} = k\}$. Each $B(k,n)$ is the set of all collections of indices $(i_1, \ldots, i_m)$ with exactly $k$ unique values between $1$ and $n$. The $V$-statistic $V_{m,n}$ can be represented as
    \begin{align*}
        V_{m,n} & = n^{-m}\sum_{k=1}^m \sum_{(i_1, \ldots, i_m) \in B(k,n)} \xi_m((X_{i_1}, Y_{i_1}), \ldots, (X_{i_m}, Y_{i_m})) \\
                & = \sum_{k=1}^m \frac{n!}{n^m \, (n-k)!} U_{k,n},
    \end{align*}
    where $U_{k,n}$ is a $U$-statistic of order $k$. In the special case $k = m$, we obtain the $U$-statistic $U_{m,n}$ from Eq.\@ \eqref{eq:def_u_v_t}. Note that
    $$
        \frac{n!}{n^m (n-k)!} = \frac{n}{n} \cdot \frac{n-1}{n} \cdots \frac{n-k+1}{n} \cdot n^{-(m-k)} \leq n^{-(m-k)}
    $$
    for any $1 \leq k \leq m$. Since $\mathbb{E}U_{m,n} = \mathbb{E}\left[\xi_m((X_1, Y_1), \ldots, (X_m, Y_m))\right]$, it holds that
    \begin{align}
        \begin{split}
            \label{eq:ew_vn}
            \left|\mathbb{E}[V_{m,n}] - \mathbb{E}\left[\xi_m((X_1, Y_1), \ldots, (X_m, Y_m))\right]\right| &= \left|\sum_{k=1}^{m-1} \mathbb{E}U_{k,n}\right| \\
            &\leq \sum_{k=1}^{m-1} n^{-(m-k)} \mathbb{E}\left|U_{k,n}\right| \\
            &\leq \frac{m}{n},
        \end{split}
    \end{align}
    because Chatterjee's rank correlation is absolutely bounded by $1$, and so $\mathbb{E}\left|U_{k,n}\right| \leq 1$ for all $k$. Furthermore,
    \begin{align*}
        \mathrm{Var}(V_{m,n}) & = \sum_{k=1}^m \mathrm{Var}\left(\frac{(n-k)!}{n^m \, n!} U_{k,n}\right) + 2\sum_{1 \leq i < j \leq m} \mathrm{Cov}\left(\frac{(n-i)!}{n^m \, n!} U_{i,n}, \frac{(n-j)!}{n^m \, n!} U_{j,n}\right) \\
                              & \leq \mathrm{Var}(U_{m,n}) + \sum_{k=1}^{m-1} n^{-2(m-k)} \mathrm{Var}(U_{k,n}) + 2 \sum_{1 \leq i < j \leq m} n^{-2m + i + j} \left|\mathrm{Cov}(U_{i,n}, U_{j,n})\right|                         \\
                              & \leq \mathrm{Var}(U_{m,n}) + 2 \sum_{k=1}^{m-1} \frac{k}{n^{2(m+k) + 1}} + 4 \sum_{1 \leq i<j \leq m} n^{-2m+i+j} \sqrt{\frac{ij}{n^2}}                                                            \\
                              & \leq 2\frac{m}{n} + 2\frac{m^2}{n^3} + 4 \sum_{1 \leq i<j \leq m} \frac{m}{n^{2m+1-i-j}},
    \end{align*}
    where we have again used the fact that, for any $U$-statistic $U$ of order $k$ with kernel $h$ and sample data $W_1, \ldots, W_n$, it holds by Theorem 5.2 in \cite{hoeffding:1948} that
    $$
        \mathrm{Var}(U) \leq \frac{k}{n} \, \mathrm{Var}(h(W_1, \ldots, W_k)),
    $$
    and if $h \leq c$ almost surely, then $\mathrm{Var}(h(W_1, \ldots, W_k)) \leq 2c^2$. In this case, $c = 1$. Now,
    \begin{align*}
        \sum_{1 \leq i<j \leq m} \frac{m}{n^{2m+1-i-j}} & = \sum_{s=3}^{2m-1} \left\lfloor \frac{s-1}{2}\right\rfloor \frac{m}{n^{2m+1 - s}} \leq \sum_{s=3}^{2m-1} \frac{m^2}{n^{2m+1-s}}              \\
                                                        & = \frac{m^2}{n^2} + \sum_{s=3}^{2m-2} \frac{m^2}{n^{2m+1-s}} \leq \frac{m^2}{n^2} + 2m\,\frac{m^2}{n^3} = \frac{m^2}{n^2} + 2\frac{m^3}{n^3}.
    \end{align*}
    Thus,
    \begin{equation}
        \label{eq:var_vn}
        \mathrm{Var}(V_{m,n}) \leq 2\frac{m}{n} + 2\frac{m^2}{n^3} + 4\frac{m^2}{n^2} + 8\frac{m^3}{n^3} \leq 16 \, \frac{m}{n}
    \end{equation}
    for almost all $n$, since $m = o\left(\sqrt{n}\right)$ by assumption. Eqs.\@ \eqref{eq:ew_vn} and \eqref{eq:var_vn} now give us
    \begin{align}
        \begin{split}
            \label{eq:varianz_vmn_ew}
            V_{m,n} &= \mathbb{E}\left[\xi_m((X_1, Y_1), \ldots, (X_m, Y_m))\right] + \frac{m}{n} + \mathcal{O}_\mathbb{P}\left(\sqrt{\frac{m}{n}}\right) \\
            &= \mathbb{E}\left[\xi_m((X_1, Y_1), \ldots, (X_m, Y_m))\right] +  \mathcal{O}_\mathbb{P}\left(\sqrt{\frac{m}{n}}\right).
        \end{split}
    \end{align}
    Standard theory for $U$-statistics implies that
    $$
        U_{m,n} = \mathbb{E}\left[\xi_m((X_1, Y_1), \ldots, (X_m, Y_m))\right] +  \mathcal{O}_\mathbb{P}\left(\sqrt{\frac{m}{n}}\right),
    $$
    and so
    $$
        |U_{m,n} - V_{m,n}| = \mathcal{O}_\mathbb{P}\left(\sqrt{\frac{m}{n}}\right).
    $$
    By Eq.\@ \eqref{eq:tildeschlangestern_tildestern}, we therefore get
    \begin{equation}
        \label{eq:tmn_tilde_stern_prob_konv}
        \tilde{T}_{m,n}^* = T_{m,n}^* + o_\mathbb{P}(1),
    \end{equation}
    and it suffices to prove weak convergence in probability of $\tilde{T}_{m,n}^*$.

    The statistic $\tilde{T}_{m,n}^*$ obviously enjoys an expression as a functional of the bootstrap data and the empirical measure, $\tilde{T}_{m,n}^* = \tilde{t}_m\left((X_1^*, Y_1^*), \ldots, (X_{m}^*, Y_m^*), Q_n\right)$. Analogously, it holds for the observed statistic that $T_n = \tilde{t}_n((X_1, Y_1), \ldots, (X_n, Y_n), Q)$. We again need to bound the difference
    \begin{align}
        \begin{split}
            \label{eq:variance_difference_1}
            &\tilde{t}_m((X_1, Y_1), \ldots, (X_m, Y_m), Q_n) - \tilde{t}_m((X_1, Y_1), \ldots, (X_m, Y_m), Q) \\
            &\quad= \sqrt{m} \left( \int \xi_m ~\mathrm{d}Q^m - \int \xi_m ~\mathrm{d}Q_n^m\right) \\
            &\quad= \sqrt{m}\left(\mathbb{E}\left[\xi_m((X_1, Y_1), \ldots, (X_m, Y_m))\right] - V_{m,n}\right).
        \end{split}
    \end{align}
    We know from Eq.\@ \eqref{eq:varianz_vmn_ew} that the last term is $\sqrt{m} \, \mathcal{O}_\mathbb{P}\left(\sqrt{m/n}\right) = \mathcal{O}_\mathbb{P}\left(m/\sqrt{n}\right)$, which is $o_\mathbb{P}(1)$ by assumption. Theorem 1 in \cite{bickel_et_al:1997} now once more implies
    \begin{equation}
        \label{eq:bickeletal_tschlange}
        \left|\mathbb{E}\left[h\left(\tilde{T}_{m,n}^*\right) ~|~(X_1, Y_1), \ldots, (X_n, Y_n)\right] - \mathbb{E}[h(T_m)]\right| \xrightarrow[n \to \infty]{\mathbb{P}} 0
    \end{equation}
    for any continuous and bounded real-valued function $h$. We can now proceed as before to show that the assumptions of Lemma \ref{lem:glivenko_cantelli_in_prob} are satisfied, which proves  together with Eq.\@ \eqref{eq:tmn_tilde_stern_prob_konv} the assertion of Theorem \ref{thm:mon_bs_u} under condition (i).
\end{proof}

\subsection{Proof of Theorem \ref{thm:konvergenz_wasserstein_metrik}}
\label{subsec:proof_thm_wasserstein}

We state and prove three auxiliary results, which will be essential in the proof of Theorem \ref{thm:konvergenz_wasserstein_metrik}.

\begin{lemma}
    \label{lem:wasserstein_permutation}
    Let $X,Y $ be two $\mathcal{S}$-valued
    random variables and $Z : \mathcal{S} \to \mathcal{S}$ a random function such that $Z$ is independent of $X$ and $z(X)$ has the same distribution as $X$  for $\mathcal{L}(Z)$-almost all $z$. Then the set of couplings of $Z(X)$ and $Y$ is identical to the set of couplings of $X$ and $Y$.
\end{lemma}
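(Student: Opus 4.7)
The plan is to reduce the claim to the elementary observation that the set of couplings of two random variables depends only on their marginal distributions. The only substantive step is therefore to verify that $Z(X)$ and $X$ share the same distribution; once this is established, the conclusion is immediate.

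To verify $\mathcal{L}(Z(X)) = \mathcal{L}(X)$, I would use the independence of $Z$ and $X$ together with Fubini's theorem. For any measurable $A \subset \mathcal{S}$, the joint law of $(Z, X)$ is a product measure, so
$$
\mathbb{P}(Z(X) \in A) = \int \mathbb{P}(z(X) \in A)\,\mathrm{d}\mathcal{L}(Z)(z).
$$
By assumption, the integrand equals $\mathbb{P}(X \in A)$ for $\mathcal{L}(Z)$-almost every $z$, whence $\mathbb{P}(Z(X) \in A) = \mathbb{P}(X \in A)$. Thus $Z(X)$ and $X$ have identical marginal distributions.

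Since the collection of couplings of two random variables is by definition the set of all joint probability measures on $\mathcal{S} \times \mathcal{S}$ with the prescribed marginals, it depends only on those marginals. Because $\mathcal{L}(Z(X)) = \mathcal{L}(X)$ while the marginal of $Y$ is unchanged, the sets of couplings of $(Z(X), Y)$ and $(X, Y)$ coincide, which is the claimed statement.

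The only place where one needs to be mildly careful is the measurability of $(z, x) \mapsto z(x)$ in the random-function formulation, so that Fubini applies; but under the standard setup in which $\mathcal{L}(Z)$ is a measure on a space of measurable maps with a product-measurable evaluation, this is automatic, and I would simply remark on it rather than dwell on it. I expect no real obstacle beyond this measurability check.
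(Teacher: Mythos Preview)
Your proposal is correct and matches the paper's proof essentially verbatim: the paper also computes $\mathbb{P}(Z(X)\in M)$ via Fubini using the product structure from independence, obtains $\mathbb{P}(X\in M)$, and then invokes that couplings depend only on the marginals. Your added remark on joint measurability of the evaluation map is a reasonable caveat that the paper leaves implicit.
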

\begin{proof}
    %  Note that the set of couplings between two random variables $U$ and $V$ only depends on their respective distributions. It therefore suffices to show that $Z(X)$ has the same distribution as $X$. 
    Let $M \subseteq \mathcal{S}$ be a measurable set. Then
    $$
        \mathbb{P}(Z(X) \in M) = \iint \textbf{1}_M(z(x)) ~\mathrm{d}\mathbb{P}^X(x) ~\mathrm{d}\mathbb{P}^Z(z) = \iint \textbf{1}_M(x) ~\mathrm{d}\mathbb{P}^X(x) ~\mathrm{d}\mathbb{P}^Z(z) = \mathbb{P}(X \in M).
    $$
    Therefore $Z(X)$ has the same distribution as $X$, and so the set of all couplings of $Z(X)$ and $Y$ is identical to the set of all couplings of $X$ and $Y$.
\end{proof}

\begin{lemma}
    \label{lem:darstellungslemma}
    Let $(\mathcal{X}, \mathcal{B}(\mathcal{X}))$ and $(\mathcal{Y}, \mathcal{B}(\mathcal{Y}))$ be two Borel spaces and $f : \mathcal{X} \to \mathcal{Y}$ a measurable function. If $Y$ is a $\mathcal{Y}$-valued random variable such that $\mathcal{L}(Y) = \mathcal{L}(f(X))$ for some $\mathcal{X}$-valued random variable $X$, then there exists a copy $X'$ of $X$ such that $Y = f(X')$ almost surely.
\end{lemma}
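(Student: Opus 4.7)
The plan is to invoke the existence of regular conditional distributions, which is guaranteed by the Borel space assumption on $\mathcal{X}$. Set $\mu := \mathcal{L}(X)$ and $\nu := \mathcal{L}(Y) = \mathcal{L}(f(X))$, and consider the joint law of $(X, f(X))$ on $\mathcal{X} \times \mathcal{Y}$. Because $\mathcal{X}$ is Borel, this joint law admits a disintegration: there exists a Markov kernel $(\kappa_y)_{y \in \mathcal{Y}}$ from $\mathcal{Y}$ to $\mathcal{X}$ such that for every measurable $A \subseteq \mathcal{X}$ and $B \subseteq \mathcal{Y}$,
$$
\mathbb{P}\bigl(X \in A,\; f(X) \in B\bigr) \;=\; \int_B \kappa_y(A) \, \mathrm{d}\nu(y).
$$

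The next step is to show that $\kappa_y$ concentrates on the fibre $f^{-1}(\{y\})$ for $\nu$-almost every $y$. To this end, consider the measurable set $E := \{(x,y) \in \mathcal{X} \times \mathcal{Y} : f(x) \neq y\}$. On the one hand, $\mathbb{P}\bigl((X, f(X)) \in E\bigr) = 0$. On the other hand, Fubini applied to the disintegration yields
$$
\mathbb{P}\bigl((X, f(X)) \in E\bigr) \;=\; \int \kappa_y\bigl(\{x : f(x) \neq y\}\bigr) \, \mathrm{d}\nu(y),
$$
hence the integrand vanishes $\nu$-almost surely.

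Finally, I would construct $X'$ by drawing from $\kappa_Y$ given $Y$. Concretely, since $\mathcal{X}$ is Borel, one may enlarge the underlying probability space by an independent random variable $U \sim \mathrm{Uniform}(0,1)$ and, using a standard measurable-selection result, find a measurable map $\Psi : \mathcal{Y} \times (0,1) \to \mathcal{X}$ such that $\Psi(y, U) \sim \kappa_y$ for every $y$. Setting $X' := \Psi(Y, U)$, one checks that for any measurable $A \subseteq \mathcal{X}$,
$$
\mathbb{P}(X' \in A) \;=\; \int \kappa_y(A) \, \mathrm{d}\nu(y) \;=\; \mathbb{P}(X \in A, f(X) \in \mathcal{Y}) \;=\; \mu(A),
$$
so $X'$ is a copy of $X$. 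Moreover, by the previous step $\kappa_Y$ is concentrated on $f^{-1}(\{Y\})$ almost surely, so $f(X') = Y$ almost surely.

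The main obstacle is purely bookkeeping rather than mathematical depth: one has to invoke the Borel space hypothesis at the right points (for the existence of the disintegration and for the measurable selection yielding $\Psi$), and one must be comfortable extending the probability space by an auxiliary $U$, which is implicit in the phrase ``there exists a copy $X'$ of $X$.'' Everything else reduces to a routine Fubini computation.
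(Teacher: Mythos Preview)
Your argument is correct and follows the same overall strategy as the paper: disintegrate $\mu=\mathcal{L}(X)$ along $f$ into conditional laws $(\kappa_y)_{y\in\mathcal{Y}}$, observe that $\kappa_y$ is concentrated on the fibre $f^{-1}(\{y\})$ for $\nu$-almost every $y$, and then build $X'$ by randomising along the fibre over $Y$. The only notable difference is in how the external randomisation is realised. You use the standard kernel-representation device: a single auxiliary $U\sim\mathrm{Unif}(0,1)$ together with a measurable map $\Psi:\mathcal{Y}\times(0,1)\to\mathcal{X}$ such that $\Psi(y,U)\sim\kappa_y$, and set $X'=\Psi(Y,U)$. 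The paper instead forms the (possibly uncountable) product measure $\xi=\bigotimes_{y\in\mathcal{Y}}\mu_y$ via Kolmogorov's extension theorem, takes a $\xi$-distributed $U$ independent of $Y$, and defines $X'=\pi_Y(U)$ as the projection onto the random coordinate $Y$. Your route is the more customary textbook construction and avoids the somewhat heavy product-space detour; the paper's route, on the other hand, sidesteps the need to cite a measurable-selection or kernel-representation lemma. Both invocations of the Borel-space hypothesis are in the right places, and the Fubini verifications are identical in spirit.
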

\begin{proof}
    Let $\mu$ and $\nu$ denote the distributions of $X$ and $Y$, respectively. Then, by Theorem 1 in \cite{chang_pollard:1997}, there exists an $f$-disintegration of $\mu$, i.e.\@ a $\nu$-almost surely uniquely determined collection $(\mu_y)_{y \in \mathcal{Y}}$ of probability measures on $\mathcal{X}$ (equipped with its Borel $\sigma$-algebra) such that $\mu_y(f \neq y) = 0$ for $\nu$-almost all $y \in \mathcal{Y}$, and for all non-negative $g:\mathcal{X} \to \mathbb{R}_{\geq 0}$ it holds that $y \mapsto \int g ~\mathrm{d}\mu_y$ is measurable and $\int g ~\mathrm{d}\mu = \iint g ~\mathrm{d}\mu_y ~\mathrm{d}\nu(y)$. Note that in  general the disintegrating measures $\mu_y$ are only $\sigma$-finite. The fact that they are probability measures is a consequence of Theorem 2 in \cite{chang_pollard:1997}. The disintegration measures are regular conditional probabilities with the nice property that they concentrate on the level sets $\{f = y\}$. By Kolmogorov's existence theorem \citep[e.g.\@ Theorem 36.1 in][]{billingsley:prob_and_measure} we can construct the product measure $\xi := \bigotimes_{y \in \mathcal{Y}} \mu_y$ on the measurable space $\left(\prod_{y \in \mathcal{Y}}\mathcal{X}, \bigotimes_{y \in \mathcal{Y}} \mathcal{B}(\mathcal{X})\right)$. Let $U$ denote a random variable with distribution $\xi$ that is independent of $Y$, and for any $y \in \mathcal{Y}$ let $\pi_y : \prod_{y \in \mathcal{Y}} \mathcal{X} \to \mathcal{X}$ denote the projection onto the $y$-coordinate. Then we define $X' := \pi_Y(U)$, i.e.\@ the projection of $U$ onto the random coordinate $Y$. Then, for any measurable $A \in \mathcal{B}(\mathcal{X})$ it holds that
    \begin{align*}
        \mathbb{P}(X' \in A) & = \iint \textbf{1}_A(\pi_y(u)) ~\mathrm{d}\xi(u) ~\mathrm{d}\nu(y)
        % \\ &
        = \int \textbf{1}_A(v) ~\mathrm{d}\mu_y(v) ~\mathrm{d}\nu(y)
        % \\ &
        = \int \textbf{1}_A ~\mathrm{d}\mu = \mathbb{P}(X \in A),
    \end{align*}
    where the first equation holds due to Fubini's theorem, the second equation holds because $\xi$ has marginals $\mu_y, y \in \mathcal{Y},$ by construction and the third equation holds because $(\mu_y)_{y \in \mathcal{Y}}$ forms a disintegration of $\mu$. $X'$ is therefore a copy of $X$. On the other hand, writing $N := \{(x,y) \in \mathcal{X} \times \mathcal{Y} ~|~ f(x) \neq y\}$ and $M(y) := \{f \neq y\}$, it holds that
    \begin{align*}
        \mathbb{P}(f(X') \neq Y) & = \iint \textbf{1}_N(\pi_y(u), y) ~\mathrm{d}\xi(u)~\mathrm{d}\nu(y)
        %\\    &
        = \iint \textbf{1}_{M(y)}(\pi_y(u)) ~\mathrm{d}\xi(u) ~\mathrm{d}\nu(y)                         \\
                                 & = \iint \textbf{1}_{M(y)}(v) ~\mathrm{d}\mu_y(v) ~\mathrm{d}\nu(y)
        %    \\ &
        = \int 0 ~\mathrm{d}\nu(y) = 0,
    \end{align*}
    where we have again used Fubini's theorem for the first equation, the construction of $N$ and $M(y)$ in the second equation, the fact that $\xi$ has marginals $\mu_y, y \in \mathcal{Y},$ in the third equation, and the concentration of each disintegration measure $\mu_y$ onto the level sets $\{f = y\}$ in the fourth equation.
\end{proof}

As a consequence of Lemma \ref{lem:darstellungslemma}, we have the following corollary.
\begin{corollary}
    \label{cor:wasserstein_inf}
    Let $f :\mathcal{S}' \to \mathcal{S}$ be a function between two Borel spaces $\mathcal{S}, \mathcal{S}'$. Let $X$ and $Z$ be two random variables with values in $\mathcal{S}'$ and $\mathcal{S}$, respectively. Then, for any $p \geq 1$, it holds that
    \begin{equation}
        \label{eq:wasserstein_inf_1}
        d_p^p\left(\mathcal{L}(f(X)), \mathcal{L}(Z)\right) = \inf \mathbb{E}\left[|f(X') - Z'|^p\right],
    \end{equation}
    where the infimum ranges over all vectors $(X', Z')$ whose marginals $X'$ and $Z'$ have distributions $\mathcal{L}(X)$ and $\mathcal{L}(Z)$, respectively.
\end{corollary}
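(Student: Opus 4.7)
The plan is to prove the two inequalities separately. For the direction
$$
d_p^p\bigl(\mathcal{L}(f(X)), \mathcal{L}(Z)\bigr) \leq \inf \mathbb{E}\bigl[|f(X') - Z'|^p\bigr]
$$
I would observe that whenever $(X', Z')$ is a coupling of $\mathcal{L}(X)$ and $\mathcal{L}(Z)$, the image $(f(X'), Z')$ is a coupling of $\mathcal{L}(f(X))$ and $\mathcal{L}(Z)$. The definition of $d_p$ therefore bounds $d_p^p\bigl(\mathcal{L}(f(X)), \mathcal{L}(Z)\bigr)$ from above by $\mathbb{E}|f(X') - Z'|^p$, and taking the infimum over all such $(X', Z')$ yields the claim.

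For the reverse inequality, the strategy is to start from an arbitrary coupling $(Y', Z')$ of $\mathcal{L}(f(X))$ and $\mathcal{L}(Z)$ and lift $Y'$ to some $X'$ with $\mathcal{L}(X') = \mathcal{L}(X)$ and $f(X') = Y'$ almost surely, while keeping $Z'$ unchanged. To achieve this I would reuse the explicit construction from the proof of Lemma \ref{lem:darstellungslemma}: write $\mu = \mathcal{L}(X)$, $\nu = \mathcal{L}(f(X))$, let $(\mu_y)_{y \in \mathcal{S}}$ denote the $f$-disintegration of $\mu$, and consider the product measure $\xi := \bigotimes_{y \in \mathcal{S}} \mu_y$ on $\prod_{y \in \mathcal{S}} \mathcal{S}'$. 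After enlarging the probability space if necessary, introduce a random variable $U \sim \xi$ that is independent of the pair $(Y', Z')$, and define $X' := \pi_{Y'}(U)$, where $\pi_y$ denotes the projection onto the $y$-coordinate.

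The same Fubini computation as in Lemma \ref{lem:darstellungslemma} then shows that $\mathcal{L}(X') = \mu$ and $f(X') = Y'$ almost surely. Since $Z'$ has not been touched, $(X', Z')$ is a legitimate coupling of $\mathcal{L}(X)$ and $\mathcal{L}(Z)$, and
$$
\mathbb{E}\bigl[|f(X') - Z'|^p\bigr] = \mathbb{E}\bigl[|Y' - Z'|^p\bigr].
$$
Taking the infimum over all couplings $(Y', Z')$ of $\mathcal{L}(f(X))$ and $\mathcal{L}(Z)$ on the right gives the desired lower bound on $d_p^p(\mathcal{L}(f(X)), \mathcal{L}(Z))$.

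The only subtlety I anticipate is verifying that the enlarged probability space supporting the auxiliary variable $U$ can be chosen so that $U$ is independent of the entire pair $(Y', Z')$ — not just of $Y'$ as in Lemma \ref{lem:darstellungslemma} — so that $Z'$ retains both its marginal distribution and its joint distribution with $Y'$. This is standard (one can always take the product of the given space with $(\prod_y \mathcal{S}', \xi)$), but it is the step that makes the lifting compatible with the pre-specified coupling and thus the crux of the argument.
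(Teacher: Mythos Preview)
Your proposal is correct and follows essentially the same route as the paper: one inequality comes from the observation that $(f(X'),Z')$ is a coupling of $\mathcal{L}(f(X))$ and $\mathcal{L}(Z)$, and the other comes from lifting a near-optimal coupling $(Y',Z')$ via Lemma~\ref{lem:darstellungslemma}. The paper phrases the lifting step through an $\varepsilon$-approximation rather than an arbitrary coupling and does not spell out the independence of $U$ from the full pair $(Y',Z')$; your explicit remark on that point is a welcome clarification rather than a departure from the argument.
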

\begin{proof}
    By definition of the Wasserstein distance, we have
    \begin{equation}
        \label{eq:wasserstein_inf_2}
        d_p^p\left(\mathcal{L}(f(X)), \mathcal{L}(Z)\right) = \inf \mathbb{E}\left[|Y' - Z'|^p\right],
    \end{equation}
    where the infimum is taken  over all random vectors $(Y', Z')$ with marginal distributions $\mathcal{L}(f(X))$ and $\mathcal{L}(Z)$. For any $\varepsilon > 0$ we can find a vector $(Y'_\varepsilon, Z'_\varepsilon)$ such that $\|Y'_\varepsilon - Z'_\varepsilon\|_p^p \leq d_p^p\left(\mathcal{L}(f(X)), \mathcal{L}(Z)\right) + \varepsilon$. By Lemma \ref{lem:darstellungslemma} we can find a random variable $X'_\varepsilon$ such that $Y'_\varepsilon = f(X'_\varepsilon)$ almost surely, and so it holds that $\|f(X'_\varepsilon) - Z'_\varepsilon\|_p^p \leq d_p^p\left(\mathcal{L}(f(X)), \mathcal{L}(Z)\right) + \varepsilon$. This implies
    $$
        \inf \mathbb{E}\left[|f(X') - Z'|^p\right] \leq d_p^p\left(\mathcal{L}(f(X)), \mathcal{L}(Z)\right),
    $$
    since $\varepsilon > 0$ was arbitrary. On the other hand, the reverse inequality holds because if $X'$ has distribution $\mathcal{L}(X)$, then $f(X')$ has distribution $\mathcal{L}(f(X))$, and so the infimum in Eq.\@ \eqref{eq:wasserstein_inf_1} is taken over a subset of that over which the infimum in Eq.\@ \eqref{eq:wasserstein_inf_2} is taken.
\end{proof}

\begin{proof}[Proof of Theorem \ref{thm:konvergenz_wasserstein_metrik}]
    Note that
    $$
        T_{m,n}^* = \sqrt{m}\left(\xi_m^* - \mathbb{E}\left[\xi_m^* ~|~(X_1, Y_1), \ldots, (X_n, Y_n)\right]\right)
    $$
    can be constructed in three steps. First, we generate sample data $(X_1, Y_1), \ldots, (X_n, Y_n)$. For ease of notation let us write $Z_k := (X_k, Y_k)$. Next, we draw (independently from the observed sample) a random permutation $s$ operating on $Z := (Z_1, \ldots, Z_n)$. Finally, we evaluate the statistic $T_m$ on the first $m$ observation of the randomly permuted sample. Formally, we have
    \begin{equation}
        \label{eq:bs_funktion_darstellung}
        T_{m,n}^* = \sqrt{m}\left(T_m(\pi_m(s(Z))) - u(s(Z))\right) =: f(s(Z)),
    \end{equation}
    where $\pi_m$ denotes the projection on the first $m$ coordinates, $s(Z)$ denotes the permuted sample (according to the random permutation $s$) and $u(Z)$ is the $U$-statistic with kernel $\xi_m$ and sample data $Z$ (due to the symmetry of $\xi_m$ it holds that $u(Z) = u(s(Z))$). For any two random variables $U$ and $V$ let $\Gamma(U,V)$ denote the set of their couplings. Then,
    \begin{align}
        \label{eq:wasserstein_tm_tmstern}
        \begin{split}
            d_2^2\left(\mathcal{L}\left(T_{m,n}^*\right), \mathcal{N}\left(0, \sigma^2\right)\right) &= d_2^2\left(\mathcal{L}(f(s(Z))), \mathcal{N}\left(0, \sigma^2\right)\right) = \inf_{(U,V) \in \Gamma_1} \mathbb{E}\left[|f(U)-V|^2\right] \\
            &= \inf_{(U,V) \in \Gamma_2} \mathbb{E}\left[|f(U)-V|^2\right] = d_2^2(\mathcal{L}(f(Z)), \mathcal{N}\left(0, \sigma^2\right)),
        \end{split}
    \end{align}
    where $\Gamma_1 := \Gamma(\mathcal{L}(s(Z)), \mathcal{N}\left(0, \sigma^2\right))$ is the set of all couplings of $\mathcal{L}(s(Z))$ and $\mathcal{N}\left(0, \sigma^2\right)$, $\Gamma_2 := \Gamma(\mathcal{L}(Z), \mathcal{N}\left(0, \sigma^2\right))$ is the set of all couplings of $\mathcal{L}(Z)$ and $\mathcal{N}\left(0, \sigma^2\right)$, and the first four equations hold due to Eq.\@ \eqref{eq:bs_funktion_darstellung}, Corollary \ref{cor:wasserstein_inf}, Lemma \ref{lem:wasserstein_permutation} and again Corollary \ref{cor:wasserstein_inf} (in that order). In order to prove that the right-hand side in \eqref{eq:wasserstein_tm_tmstern} converges to $0$ note that we have already shown in the proof of Theorem \ref{thm:mon_bs_u} that
    \begin{equation}
        \label{eq:l2_abstand_fz_tm}
        \|f(Z) - T_m\|_{L_2}^2 = \mathrm{Var}\left(\sqrt{m} \,\mathbb{E}\left[\xi_m^* ~|~ (X_1, Y_1), \ldots, (X_n, Y_n)\right]\right) = o(1).
    \end{equation}
    Furthermore, since the conditions of Theorem \ref{thm:mon_bs_u} are satisfied, $T_m$ converges weakly to $\mathcal{N}\left(0,\sigma^2\right)$. By assumption  $(T_m)_{m \in \mathbb{N}}$ is uniformly square-integrable,  and  Theorem 6.9 in \cite{villani:optimaltransport} implies that $T_m$ converges to $\mathcal{N}\left(0,\sigma^2\right)$ in the Wasserstein metric $d_2$. Thus, by Eq.\@ \eqref{eq:l2_abstand_fz_tm}, the right-hand side of Eq.\@ \eqref{eq:wasserstein_tm_tmstern} converges to $0$, which proves the assertion.
\end{proof}

\subsection{Proof of Remark \ref{wassersteinremark}}

The proof is a consequence of Remark \ref{rem1} $(i)$ and the following lemma.

\begin{lemma}
    \label{lem:asymptotisch_normal_lemma}
    Let $(X_n)_{n \in \mathbb{N}}$ be a sequence of random variables such that $X_n/\sigma_n \to \mathcal{N}(0,1)$ in distribution, where $\sigma_n^2 := \mathrm{Var}(X_n)$, and $X_n \to \mathcal{N}\left(0,\sigma^2\right)$ in distribution. Then $(X_n)_{n \in \mathbb{N}}$ is uniformly square-integrable.
\end{lemma}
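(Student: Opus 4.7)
The plan is to apply the forward direction of Villani's Theorem 6.9 (already cited in the paper): given that we have weak convergence $X_n \Rightarrow \sigma Z$ with $Z \sim \mathcal{N}(0,1)$, uniform square-integrability follows the moment we show that $\mathbb{E} X_n^2 \to \sigma^2$. Since $\mathbb{E} X_n^2 = \sigma_n^2 + \mu_n^2$ with $\mu_n := \mathbb{E} X_n$, this decomposes into showing $\sigma_n \to \sigma$ and $\mu_n \to 0$.

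For $\sigma_n \to \sigma$ I would use the subsequence principle. From any subsequence of $(\sigma_n)$, extract a further subsequence with $\sigma_{n_k} \to \sigma^* \in [0,\infty]$. The case $\sigma^* = 0$ is incompatible with the hypothesis, since $X_{n_k} = \sigma_{n_k}(X_{n_k}/\sigma_{n_k}) \to 0$ in probability — the factor $X_{n_k}/\sigma_{n_k}$ being tight by the first hypothesis — which contradicts $X_{n_k} \Rightarrow \mathcal{N}(0,\sigma^2)$. The case $\sigma^* = \infty$ is also excluded, because for every $M > 0$,
\[
  \mathbb{P}(|X_{n_k}| \leq M) \;=\; \mathbb{P}\bigl(|X_{n_k}/\sigma_{n_k}| \leq M/\sigma_{n_k}\bigr) \;\longrightarrow\; 0
\]
by the Portmanteau theorem and the continuity of the standard normal distribution, which violates tightness of $(X_n)$. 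The remaining case $\sigma^* \in (0,\infty)$, combined with Slutsky's lemma, yields $X_{n_k} \Rightarrow \sigma^* \cdot \mathcal{N}(0,1)$, and uniqueness of the weak limit forces $\sigma^* = \sigma$. Hence every subsequence of $(\sigma_n)$ has a further subsequence converging to $\sigma$, so $\sigma_n \to \sigma$.

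Once $\sigma_n \to \sigma$, tightness of $(X_n)$ combined with Chebyshev's inequality — which places $X_n$ in $[\mu_n - k\sigma_n,\mu_n + k\sigma_n]$ with probability at least $1 - 1/k^2$ — forces $(\mu_n)$ to be bounded (otherwise the interval in which $X_n$ concentrates would escape to infinity, contradicting tightness). Therefore $\sup_n \mathbb{E} X_n^2 = \sup_n(\sigma_n^2 + \mu_n^2) < \infty$, which by Cauchy--Schwarz makes $(X_n)$ uniformly integrable. Uniform integrability together with $X_n \Rightarrow \sigma Z$ then gives $\mu_n = \mathbb{E} X_n \to \mathbb{E}(\sigma Z) = 0$, so $\mathbb{E} X_n^2 = \sigma_n^2 + \mu_n^2 \to \sigma^2 = \mathbb{E}(\sigma Z)^2$.

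Plugging the weak convergence and the moment convergence into Villani's Theorem 6.9 yields uniform square-integrability of $(X_n)$ and closes the proof. The main obstacle is the subsequence argument establishing $\sigma_n \to \sigma$: the exploding case $\sigma^* = \infty$ in particular requires careful handling of the double limit $k \to \infty$ and $M/\sigma_{n_k} \to 0$ via continuity of the limiting distribution. The remaining steps are a routine combination of Chebyshev's inequality, tightness, and the Villani equivalence.
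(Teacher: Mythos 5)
Your proof is correct and follows essentially the same route as the paper: establish $\sigma_n \to \sigma$ by a subsequence/contradiction argument and then invoke the equivalence in Villani's Theorem 6.9 (the paper compares distribution functions directly instead of splitting into the cases $\sigma^*=0$, finite, $\infty$ with Slutsky, but the idea is the same). The only substantive difference is that you also track the means, proving $\mu_n=\mathbb{E}X_n\to 0$ so that $\mathbb{E}X_n^2=\sigma_n^2+\mu_n^2\to\sigma^2$; the paper passes from $\sigma_n\to\sigma$ straight to $d_2$-convergence, which implicitly treats $X_n$ as centred (harmless in the application, where $T_n$ is centred, but your version covers the lemma exactly as stated).
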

\begin{proof}
    As $X_n$ weakly converges to $\mathcal{N}\left(0,\sigma^2\right)$, it suffices to show that $\sigma_n \to \sigma$ since this will imply convergence in the Wasserstein distance $d_2$ by Theorem 6.9 in \cite{villani:optimaltransport}, which by the same theorem is equivalent to weak convergence and uniform square-integrability. Now assume that $\sigma_n$ does not converge to $\sigma$ and fix some $\varepsilon > 0$ and a subsequence $(\sigma_{n_k})_{k \in \mathbb{N}}$ such that either $\sigma_{n_k} < \sigma - \varepsilon$ or $\sigma_{n_k} > \sigma + \varepsilon$ for all $k \in \mathbb{N}$. Without loss of generality suppose the subsequence satisfies the first inequality (as the other case can be treated analogously), then for any $z \in \mathbb{R}$
    $$
        \mathbb{P}(X_{n_k}/\sigma_{n_k} \leq z) = \mathbb{P}(X_{n_k} \leq \sigma_{n_k} z) \leq \mathbb{P}(X_{n_k} \leq (\sigma - \varepsilon)z) \xrightarrow[k \to \infty]{} \mathbb{P}(\sigma Z \leq (\sigma + \varepsilon) z),
    $$
    where $Z$ is some standard normal random variable. But by assumption, the left-hand side must converge to $\mathbb{P}(Z \leq z)$, which is a contradiction.
\end{proof}

\section*{Acknowledgements}
This work was partially supported by the DFG Research unit 5381 \textit{Mathematical Statistics in the Information Age}, project number 460867398. The authors would like to thank three unknown referees for their constructive comments on an earlier version of this paper.

\bibliographystyle{abbrvnat}
\bibliography{chatterjee_bibliography}
\end{document}